\documentclass[openany, amssymb, psamsfonts]{amsart}
\usepackage{mathrsfs,comment}
\usepackage[usenames,dvipsnames]{color}
\usepackage[normalem]{ulem}
\usepackage{url}
\usepackage[all,arc,2cell]{xy}
\UseAllTwocells
\usepackage{enumerate}
\usepackage{hyperref}  
\hypersetup{%
  bookmarksnumbered=true,%
  bookmarks=true,%
  colorlinks=true,%
  linkcolor=blue,%
  citecolor=blue,%
  filecolor=blue,%
  menucolor=blue,%
  pagecolor=blue,%
  urlcolor=blue,%
  pdfnewwindow=true,%
  pdfstartview=FitBH}

%
%
%
\def\makeautorefname#1#2{\expandafter\def\csname#1autorefname\endcsname{#2}}
%
%
\def\equationautorefname~#1\null{(#1)\null}
\makeautorefname{footnote}{footnote}%
\makeautorefname{item}{item}%
\makeautorefname{figure}{Figure}%
\makeautorefname{table}{Table}%
\makeautorefname{part}{Part}%
\makeautorefname{appendix}{Appendix}%
\makeautorefname{chapter}{Chapter}%
\makeautorefname{section}{Section}%
\makeautorefname{subsection}{Section}%
\makeautorefname{subsubsection}{Section}%
\makeautorefname{theorem}{Theorem}%
\makeautorefname{thm}{Theorem}%
\makeautorefname{cor}{Corollary}%
\makeautorefname{lem}{Lemma}%
\makeautorefname{prop}{Proposition}%
\makeautorefname{pro}{Property}
\makeautorefname{conj}{Conjecture}%
\makeautorefname{defn}{Definition}%
\makeautorefname{notn}{Notation}
\makeautorefname{notns}{Notations}
\makeautorefname{rem}{Remark}%
\makeautorefname{quest}{Question}%
\makeautorefname{exmp}{Example}%
\makeautorefname{ax}{Axiom}%
\makeautorefname{claim}{Claim}%
\makeautorefname{ass}{Assumption}%
\makeautorefname{asss}{Assumptions}%
\makeautorefname{con}{Construction}%
\makeautorefname{prob}{Problem}%
\makeautorefname{warn}{Warning}%
\makeautorefname{obs}{Observation}%
\makeautorefname{conv}{Convention}%

%

\newtheorem{thm}{Theorem}[section]
\newtheorem{cor}{Corollary}[section]
\newtheorem{prop}{Proposition}[section]
\newtheorem{lem}{Lemma}[section]

\theoremstyle{definition}
\newtheorem{defn}{Definition}[section]

\newtheorem{quest}{Question}[section]
\newtheorem{rem}{Remark}[section]

\makeatletter
\let\c@obs=\c@thm
\let\c@cor=\c@thm
\let\c@prop=\c@thm
\let\c@lem=\c@thm
\let\c@prob=\c@thm
\let\c@con=\c@thm
\let\c@conj=\c@thm
\let\c@defn=\c@thm
\let\c@notn=\c@thm
\let\c@notns=\c@thm
\let\c@exmp=\c@thm
\let\c@ax=\c@thm
\let\c@pro=\c@thm
\let\c@ass=\c@thm
\let\c@warn=\c@thm
\let\c@rem=\c@thm
\let\c@sch=\c@thm
\let\c@equation\c@thm
\numberwithin{equation}{section}
\makeatother

\bibliographystyle{plain}

\title[Topological Obstructions] {Topological Obstructions to the Existence of Compact Shrinking Ricci Solitons in Dimension Four}

\author{Cameron MacMahon}

\begin{document}

\begin{abstract}

This undergraduate thesis is focused on introducing the reader to concepts related to the search for topological obstructions to the existence of compact gradient shrinking Ricci soliton metrics in dimension four. It contains a discussion of the relevant background material for this subject. Furthermore, it introduces the problem of extending the Hitchin-Thorpe inequality to gradient shrinking Ricci soliton metrics and explores the limitations of current results in that direction. At last, the topic of compact Kähler gradient shrinking Ricci solitons is introduced and the classification of these spaces is outlined in literature-study fashion. 

\end{abstract}

\maketitle

\tableofcontents

\section{Editor's Note}

Brian Klatt has informed the author of two mistakes concerning Theorem 5.13, Theorem 5.14, and the remark following it. These change not only the correctness of those statements themselves but the philosophical conclusion the author initially meant to draw at the end of section 5. The reader should refer back to this section as they read section 5, as the corrections have not been incorporated into the text (for the sake of the author's preserving his undergraduate thesis as it had been written). Klatt's remarks are below. The first concerns Theorem 5.13 below. The result in \cite{chang} only guarantees that there is a metric conformally equivalent to any metric in $\mathcal{A}$ with positive Ricci curvature. This invalidates the proof of Theorems 5.13 and thus 5.14. The second concerns the remark following Theorem 5.14 that many examples of Ricci solitons are $``$missed" by the sufficient conditions in Theorems 5.5 and 5.6. The Koiso-Cao soliton in complex dimension 2 (our case) has positive Ricci curvature, therefore it is not a-priori $``$missed" by the sufficient conditions in Theorems 5.5 and 5.6. In fact it appears to satisfy both! As Klatt points out, the expression for the Euler characteristic on page 9 used in this paper differs from Chang's \cite{chang} conventions in that there is a factor of four missing from the $|W|^2$ terms. Taking into account the difference in convention and computing the integral of $\sigma_2$ over the Koiso-Cao soliton in fact yields a positive number. Theorems proven in this paper then allow us to conclude that the Koiso-Cao soliton actually satisfies the sufficient conditions in Theorems 5.5 and 5.6 meaning it is not $``$missed" as initially claimed. The author thanks Brian Klatt for bringing these mistakes to his attention. 

\section{Introduction}

    The purpose of this thesis is to introduce the reader to the current body of knowledge surrounding the search for topological obstructions to the existence of Gradient Shrinking Ricci Soliton metrics on $\emph{compact}$ four-manifolds, perhaps with some additional structure included (e.g. Kähler). These are metrics $g$ which satisfy 
    
    $$Rc + \nabla^2f = \rho g$$
    
    For some function $f\in C^\infty(\mathcal{M})$ and $\rho > 0$ where $Rc$ is the Ricci tensor of $g$. The serious study of compact Gradient Shrinking Ricci Soliton metrics goes back at least as far as Hamilton's initiation of a program which uses the Ricci flow to study the geometry and topology of Riemannian Manifolds. The Ricci flow is the following differential equation in the metric tensor which, very roughly speaking evolves the geometry in the opposite direction of its curvature:
    
    $$\frac{\partial g(t)}{\partial t} = -2Rc$$
    
    Gradient shrinking Ricci solitons in general are important spaces because they can arise from the Ricci flow as $\emph{models of singularities}$ whose evolution is given by homothetic shrinking and evolution of the initial metric $g_0$ along integral curves of the gradient of $f$:
    
    $$g(t) = (T - t)\phi^*_{t}(g_0)$$
    
    An understanding of the singularities in the Ricci flow - and thus gradient shrinking Ricci solitons - is incredibly important because it leads to the technique of Ricci flow with surgery, permitting one to continue the flow after it has reached a singular configuration. Indeed, it was an understanding of gradient shrinking Ricci solitons in dimension three that enabled the proof of Thurston's geometrization conjecture of which the Poincare conjecture is a special case. In particular, much is known about compact gradient shrinking Ricci solitons in low dimensions other than $4$. In the case $n=2$, Hamilton has demonstrated that any such metric (in the compact case) must be isometric to a quotient of $S^2$ \cite{ham}. Similarly, in the case $n=3$ Perelman \cite{perl} and Ivey \cite{ivey} have demonstrated that compact gradient shrinking Ricci solitons are finite quotients of $S^3$. Therefore, in low dimensions the $n=4$ case represents the only severe gap in our knowledge. 

    One interesting way one can rule out the existence of compact gradient shrinking Ricci soliton metrics is to study necessary conditions on the topology which arise from the existence of such metrics. Since the non-satisfaction of such a necessary condition would preclude the existence of such a metric, these conditions are known as $\emph{obstructions}$.

    In what follows, two avenues will be taken to explore what restrictions exist within the topology of compact gradient shrinking Ricci solitons in dimension four. In section 2, compact gradient Ricci solitons are introduced in context, and we prove that consideration of the so-called $\emph{expanding}$ and $\emph{steady}$ cases amounts to an understanding of the topology of Einstein manifolds. In section 3, background for those parts of differential geometry and topology that are particular to four dimensions is provided and the local-to-global theorems which will furnish topological obstructions are introduced. In section 4 we investigate progress towards a particular topological obstruction to compact gradient shrinking Ricci soliton metrics which is natural to consider given the fact that compact gradient shinking Ricci solitons may be considered generalizations of compact positive Einstein manifolds. In particular, it is known that compact positive Einstein manifolds in dimension four must satisfy the $\emph{Hitchin-Thorpe inequality}$ relating their Euler number $\chi$ and their signature $\tau$ \cite{ernani}:

    $$2\chi \pm 3|\tau| \geq 0$$

    This is an obstruction to the existence of compact positive Einstein metrics because a compact four-manifold whose topology does not admit this relationship cannot have such a metric. Section 4 covers current progress in this direction, including proofs of two sufficient conditions that, if satisfied, guarantee that a given compact gradient shrinking Ricci soliton satisfies the Hitchin-Thorpe inequality. Section 4 also contains a discussion of the applicability of these sufficient conditions, and contains arguments using conformal geometry that restrict the class of manifolds which obey the sufficient conditions proved. Section 4 concludes with a brief meta-mathematical discussion about the status of the satisfaction of the Hitchin-Thorpe inequality for general compact gradient shrinking Ricci solitons.

    Section 5 gives an overview of the interplay between Kähler structure and gradient shrinking Ricci soltion structure on compact four-manifolds, and reads like a literature review. In particular, an obstruction akin to the Hitchin-Thorpe inequality is proved immediately. Section 5 then continues to outline a classification of all compact Kähler gradient shrinking Ricci solitons. A brief remark on extension of results to the symplectic case is then made. To that end, I would like to acknowledge my advisor, professor Huai-Dong Cao for introducing me to this area of research, for providing helpful answers to a multitude of questions, for directing my attention to the classification in section 5, and for taking me under his wing and developing me as an undergraduate mathematician. Similarly, I would like to thank professors Andrew Harder and Donald Davis, without whose tutelage and engaging problem sets I would not have been able to complete this thesis. I would also like to thank professor Claude LeBrun for a very helpful conversation, as well as for the useful background information on four-manifolds present in his articles. Lastly, I would like to thank professor Jenna Lay for being a constant source of advice, encouragement, and support throughout my undergraduate career. I credit the University of Chicago REU for their LaTeX format, available freely on their web-page.

\section{A Primer on Gradient Shrinking Ricci Solitons}

 Let $(\mathcal{M}^n, g)$ be a compact Riemannian manifold. Such a manifold is called a $\emph{compact gradient Ricci soliton}$ - or simply a gradient Ricci soliton as we work in the compact case from now on -  if the following holds for some $f \in C^\infty(\mathcal{M})$ and some $\rho \in \mathbb{R}$:

 $$Rc + \nabla^2f = \rho g$$

 where $Rc$ is the Ricci tensor of $g$. Note immediately that this equation is diffeomorphism invariant, scale invariant, and invariant under translation of the potential function. Similarly, note that when $f \equiv C$ for some constant $C$ (or just when the Hessian of $f$ vanishes), then the metric $g$ is an $\emph{Einstein metric}$. An $\emph{Einstein metric}$ is a Riemannian metric $g$ satisfying the following for some constant $\rho \in \mathbb{R}$: 
 
$$Rc = \rho g$$

Thus, gradient Ricci soliton metrics may be considered generalizations of Einstein metrics. Below, we demonstrate that the only nontrivial instances of this generalization occur when $\rho > 0$. Naturally, gradient Ricci solitons may be split into three types: $\emph{steady}$ solitons with $\rho = 0$, $\emph{expanding}$ solitons with $\rho < 0$, and $\emph{shrinking}$ solitons with $\rho > 0$. Our claim above now amounts to the statement that the only gradient Shrinking Ricci soliton metrics which are not Einstein metrics are of the shrinking type.

\begin{thm} (Hamilton, Ivey; see Cao-Zhu \cite{caoOne}) On a compact n-manifold $\mathcal{M}$, a steady or expanding gradient Ricci soliton is necessarily an Einstein metric. 

\end{thm}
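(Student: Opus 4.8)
The plan is to show that in the steady and expanding cases the potential function $f$ must be constant, whence $\nabla^2 f \equiv 0$ and the soliton equation collapses to $Rc = \rho g$. The engine of the argument is a pointwise differential identity satisfied by every gradient Ricci soliton, combined with the maximum principle applied to $f$ on the closed manifold $\mathcal{M}$.

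First I would extract the scalar relations that follow from $Rc + \nabla^2 f = \rho g$. Tracing gives $R + \Delta f = n\rho$, where $R$ denotes the scalar curvature; since $\mathcal{M}$ is closed, $\int_{\mathcal{M}} \Delta f\, dV = 0$ and so $\int_{\mathcal{M}} R\, dV = n\rho\,\mathrm{Vol}(\mathcal{M})$. Next, taking the divergence of the soliton equation and invoking the contracted second Bianchi identity $\mathrm{div}(Rc) = \tfrac{1}{2}\nabla R$, together with the Ricci identity to commute the third covariant derivatives of $f$, I would derive $Rc(\nabla f) = \tfrac{1}{2}\nabla R$; pairing this with $\nabla f$ and substituting the soliton equation once more yields the fundamental identity that $R + |\nabla f|^2 - 2\rho f$ is constant on $\mathcal{M}$, a constant I shall call $C$. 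This derivation is the main obstacle, being the only genuinely non-routine step: one must commute covariant derivatives correctly and track the curvature term the Ricci identity produces.

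With these identities in hand the remainder is soft. Writing $\Delta_f := \Delta - \langle \nabla f, \nabla\,\cdot\,\rangle$ for the drift Laplacian, I combine the trace relation with the fundamental identity to obtain $\Delta_f f = \Delta f - |\nabla f|^2 = n\rho - (R + |\nabla f|^2) = (n\rho - C) - 2\rho f$, an affine function of $f$ alone. Now apply the maximum principle on the closed manifold: at a point where $f$ attains its maximum one has $\nabla f = 0$ and $\Delta f \le 0$, hence $\Delta_f f \le 0$ there, while at a minimum point $\Delta_f f \ge 0$.

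In the expanding case $\rho < 0$ these evaluations read $(n\rho - C) - 2\rho f_{\max} \le 0$ and $(n\rho - C) - 2\rho f_{\min} \ge 0$; since $-2\rho > 0$, dividing gives $f_{\max} \le a \le f_{\min}$ with $a = (n\rho - C)/(2\rho)$, and as $f_{\max} \ge f_{\min}$ always, we conclude $f_{\max} = f_{\min}$, so $f$ is constant. In the steady case $\rho = 0$ the expression reduces to the constant $-C$, and the two evaluations give $-C \le 0 \le -C$, forcing $C = 0$ and $\Delta_f f \equiv 0$, i.e. $\Delta f = |\nabla f|^2$; integrating over $\mathcal{M}$ then gives $\int_{\mathcal{M}} |\nabla f|^2\, dV = 0$, so again $f$ is constant. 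In either case $\nabla^2 f \equiv 0$, hence $Rc = \rho g$ and $g$ is Einstein, which completes the proof.
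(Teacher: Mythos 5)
Your proposal is correct and follows essentially the same route as the paper: the traced equation $R + \Delta f = n\rho$, the Bianchi/Ricci-identity computation giving $Rc(\nabla f) = \tfrac{1}{2}\nabla R$, the resulting constancy of $R + |\nabla f|^2 - 2\rho f$, and the maximum principle at the extrema of $f$ on the closed manifold. The only cosmetic difference is that you fix the constant in the steady case via the maximum principle, where the paper instead integrates $\Delta e^{-f}$ over $\mathcal{M}$; both arguments then finish identically by integrating $\Delta f = |\nabla f|^2$ to force $f$ constant.
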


\begin{proof}

Take the trace of the soliton equation, yielding $R + \Delta f = n\rho$. Furthermore, working in normal coordinates, let us take a covariant derivative of the soliton equation and subtract two copies of what remains, yielding

$$\nabla_i \nabla_j \nabla_k f - \nabla_j \nabla_i \nabla_k f = \nabla_j R_{ik} - \nabla_i R_{jk}.$$

We may recall also the Ricci identities for commuting covariant derivatives, yielding

$$\nabla_i \nabla_j \nabla_k f - \nabla_j \nabla_i \nabla_k f = R_{ijkl}\nabla_l f.$$

Combining, one has

$$\nabla_i R_{jk} - \nabla_j R_{ik} + R_{ijkl}\nabla_l f = 0.$$

Next, trace on $j$ and $k$ and recall the second contracted Bianchi identity $\nabla_i R_{ik} = \frac{1}{2} \nabla_k R$ to obtain

$$\frac{1}{2} \nabla_i R - R_{ij} \nabla_j f = 0.$$

We now consider the following quantity: $|\nabla f|^2 + R$. Taking a covariant derivative and applying the above equation we obtain (call this equation $(*)$):

$$\nabla_i(|\nabla f|^2 + R) = 2 \nabla_j f (\nabla_ i \nabla_j f + R_{ij}).$$

 We now proceed by cases. In the steady case, $\nabla_ i \nabla_j f + R_{ij} = 0$, and thus by $(*)$ above

 $$|\nabla f|^2 + R = C$$

for some constant $C \in \mathbb{R}$. Recalling the fact that $R + \Delta f = 0$ in the steady case we obtain

$$\Delta f - |\nabla f|^2 = -C.$$

We now prove that $C \equiv 0$ using the fact that $\mathcal{M}$ is compact (we may use Green's identities):

$$0 = \int_{\mathcal{M}} \Delta e^{-f} dV_g = \int_{\mathcal{M}}(\Delta f - |\nabla f|^2)e^{-f}dV_g =  \int_{\mathcal{M}} -C e^{-f}dV_g.$$

and thus $C \equiv 0$. We now have that $(\Delta f - |\nabla f|^2) = 0$ and thus - by applying Greens' identities again - we obtain

$$0 = \int_{\mathcal{M}} \Delta f dV_g = \int_{\mathcal{M}} |\nabla f|^2 dV_g$$

and therefore $f$ is a constant as $|\nabla f|^2$ is everywhere vanishing. Hence, $g$ is an Einstein metric.

We now apply the maximum principle to tackle the expanding case. Combining $(*)$ with the expanding soliton equation we get:

$$\nabla_i(|\nabla f|^2 + R) = 2 \nabla_j f (\rho g_{ij}) = 2\rho \nabla_i f$$

for some $\rho < 0$. Subtracting the right from the left and using linearity of $\nabla$, one can conclude for expanding solitons that 

$$\nabla_i(|\nabla f|^2 + R - 2\rho f) = 0$$

for some $\rho < 0$. Using the fact that $R + \Delta f = n\rho$, the potential function must satisfy

$$\Delta f - |\nabla f|^2 + 2\rho f = C$$

for some $\rho \leq 0$ and $C \in \mathbb{R}$. Given our freedom to add constants to the potential function and retain the Soliton structure, we may normalize: 

$$\Delta f - |\nabla f|^2 + 2\rho f = 0$$

We now apply the maximum principle. When $f$ attains a maximum, $\Delta f \leq 0$ and $\nabla f \equiv 0$, thus we have

$$\rho f_{max} \geq 0$$

and so $f$ must attain a maximum at a nonpositive value as $\rho < 0$. Furthermore, when $f$ attains a minimum, $\Delta f \geq 0$ and $\nabla f \equiv 0$, thus we have 

$$\rho f_{min} \leq 0$$

and so $f_{min}$ must be nonnegative since $\rho < 0$. Thus $f_{max} = f_{min} \equiv 0$. And therefore $f \equiv 0$ and thus $g$ is Einstein since the Hessian term in the Soliton equation vanishes. This concludes the proof.

\end{proof}

 In this paper, we are concerned with investigating topological obstructions to the existence of compact gradient Ricci solitons in dimension four (the dimension we will work in from here on out). There already exist obstructions such as the Hitchin-Thorpe inequality (to be discussed in a later section) which provide information about the non-existence of Einstein metrics on compact four-manifolds. Thus, given the above result, it is natural to ask whether or not these obstructions extend to give information about the non-existence of compact gradient $\emph{shrinking}$ Ricci solitons. One might naively assume that all compact gradient Shrinking Ricci solitons are Einstein as well, but this is in fact false. Koiso and Cao \cite{caoTwo} and Wang and Zhu \cite{wang} have constructed complete gradient shrinking Ricci solitons on $\mathbb{CP}^2 \# -(\mathbb{CP}^2)$ and $\mathbb{CP}^2 \# -2(\mathbb{CP}^2)$ which are not Einstein, demonstrating that we cannot carry the above argument any further. These solitons in fact turn out to be Kähler, and we demonstrate below in section $5$ that these are in fact the only compact simply connected smooth four-manifolds admitting non-Einstein Kähler gradient shrinking Ricci solitons. 

 Before moving further, however, we note two results crucial to our current understanding of compact gradient Shrinking Ricci solitons:

 \begin{thm} (Chen \cite{chen})
 On a compact gradient shrinking Ricci soliton, the scalar curvature $R > 0$. 

 \end{thm}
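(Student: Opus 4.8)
The plan is to derive a second-order elliptic equation satisfied by the scalar curvature $R$ and then read off positivity from the maximum principle, invoking compactness only at the very end to exclude the degenerate case $R \equiv 0$. First I would reuse the two ingredients already established. The trace of the soliton equation gives $R + \Delta f = n\rho$, and the computation in the previous proof yields $\frac{1}{2}\nabla_i R = R_{ij}\nabla_j f$. Taking the divergence of this last relation, applying the contracted second Bianchi identity $\nabla_i R_{ij} = \frac{1}{2}\nabla_j R$, and using the soliton equation in the form $R_{ij} = \rho g_{ij} - \nabla_i\nabla_j f$ to rewrite $R_{ij}\nabla_i\nabla_j f = \rho R - |Rc|^2$, one arrives at the key identity
$$\Delta R - \langle \nabla f, \nabla R\rangle = 2\rho R - 2|Rc|^2.$$
Writing $\Delta_f := \Delta - \langle \nabla f, \nabla \cdot \rangle$ for the drift Laplacian, this reads $\Delta_f R = 2\rho R - 2|Rc|^2$.

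Next I would apply the minimum principle. Since $\mathcal{M}$ is compact, $R$ attains its minimum at some point $p$, where $\nabla R = 0$ and $\Delta R \geq 0$. The identity then forces $0 \leq 2\rho R(p) - 2|Rc|^2(p)$, i.e. $\rho R(p) \geq |Rc|^2(p)$. By the Cauchy-Schwarz inequality applied to the trace, $|Rc|^2 \geq \frac{1}{n}R^2$, so $\rho R(p) \geq \frac{1}{n}R(p)^2$. Because $\rho > 0$, this is incompatible with $R(p) < 0$; hence $R(p) = R_{\min} \geq 0$, and therefore $R \geq 0$ everywhere on $\mathcal{M}$.

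Finally I would upgrade $R \geq 0$ to the strict inequality $R > 0$. Rearranging the key identity gives $\Delta_f R - 2\rho R = -2|Rc|^2 \leq 0$, so $R$ is a nonnegative supersolution of the operator $\Delta_f - 2\rho$, whose zeroth-order coefficient $-2\rho$ is strictly negative. If $R$ were to vanish somewhere, it would attain the value $0$ as an interior minimum, and the strong maximum principle of E. Hopf would then force $R \equiv 0$. But integrating the trace equation over the compact manifold and using $\int_{\mathcal{M}}\Delta f\, dV_g = 0$ yields $\int_{\mathcal{M}} R\, dV_g = n\rho\,\mathrm{Vol}(\mathcal{M}) > 0$, so $R$ cannot vanish identically. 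This contradiction shows that $R$ is strictly positive at every point.

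I expect the principal obstacles to be twofold. The first is the clean derivation of the elliptic equation for $R$, which requires keeping the curvature contractions and Bianchi identities organized correctly. The second, and the genuinely essential point, is the strict-positivity step: the minimum principle alone delivers only $R \geq 0$, and ruling out the possibility $R \equiv 0$ requires combining the strong maximum principle with the global integral identity that compactness furnishes. Marshalling both of these together is where the real content of the theorem lies.
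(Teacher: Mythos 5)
Your proof is correct, but there is nothing in the paper to compare it against: the paper states this result as Theorem 2.2 and simply cites Chen \cite{chen} without giving any argument. Your derivation of the key identity $\Delta R - \langle \nabla f, \nabla R\rangle = 2\rho R - 2|Rc|^2$ from the two ingredients of the paper's Theorem 2.1 is accurate, the minimum-principle step is sound (in fact you do not even need Cauchy--Schwarz there: at the minimum point $p$ one has $\rho R(p) \geq |Rc|^2(p) \geq 0$, which already forces $R(p)\geq 0$ since $\rho>0$), and the upgrade to strict positivity via Hopf's strong maximum principle applied to the supersolution equation $(\Delta_f - 2\rho)R = -2|Rc|^2 \leq 0$, combined with $\int_{\mathcal{M}} R\, dV_g = n\rho\,\mathrm{Vol}(\mathcal{M}) > 0$, is the standard and correct way to finish. (An alternative finish: if $R \equiv 0$ the identity forces $Rc \equiv 0$, whence $\nabla^2 f = \rho g$, and tracing and integrating gives the same contradiction.) The comparison worth making is with the scope of Chen's cited theorem: Chen proves $R > 0$ for \emph{complete}, possibly non-compact, gradient shrinking solitons, where your argument collapses at the first step because $R$ need not attain a minimum; that generality requires localized maximum-principle arguments with cutoff functions on geodesic balls. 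Since the statement as quoted in the paper assumes compactness, your elementary argument fully establishes it, and it has the virtue of being self-contained where the paper relies on citation.
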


 Note that this implies that compact gradient shrinking Ricci solitons have positive Yamabe invariant. The following result was first proved in the compact gradient case by Lott \cite{lott}, and was actually proven in the general case of complete gradient shrinking Ricci solitons by Wylie \cite{wylie}:

 \begin{thm}(Lott \cite{lott})
 Compact gradient shrinking Ricci solitons have finite $\pi_1$.

 \end{thm}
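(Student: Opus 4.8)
The plan is to show that the universal cover $\widetilde{\mathcal{M}}$ is compact, since a compact manifold has finite fundamental group precisely when its universal cover is compact (in that case the covering is finitely sheeted, the fiber over a point being a closed discrete subset of a compact space). To establish compactness of $\widetilde{\mathcal{M}}$ I would produce a uniform diameter bound via a Myers-type theorem adapted to the Bakry--\'Emery Ricci tensor $Rc + \nabla^2 f$, which the soliton equation pins to the positive multiple $\rho g$ of the metric.

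First I would lift the entire soliton structure to the universal cover. Pulling back along the covering map $\pi : \widetilde{\mathcal{M}} \to \mathcal{M}$, which is a local isometry, the metric, the Ricci tensor, and the potential all lift, and the lifted potential $\widetilde{f} = f \circ \pi$ satisfies the same equation, so that $Rc + \nabla^2 \widetilde{f} = \rho \widetilde{g}$ with $\rho > 0$ on $\widetilde{\mathcal{M}}$. The crucial observation is that, because $\mathcal{M}$ is compact, both $f$ and $|\nabla f|$ are bounded on $\mathcal{M}$; pulling back preserves these bounds, so $|\widetilde{f}| \leq k$ on the (a priori noncompact) cover for a constant $k$ depending only on $\mathcal{M}$.

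Next I would run the classical second-variation argument, modified to absorb the Hessian term. Along a unit-speed minimizing geodesic $\gamma$ of length $L$ in $\widetilde{\mathcal{M}}$ I would form the index form with the test vector fields $\phi E_i$, where $\phi = \sin(\pi t / L)$ and $E_1, \dots, E_{n-1}$ is a parallel orthonormal frame orthogonal to $\gamma'$, and sum over $i$. This yields an expression involving $\int_0^L \phi^2 Rc(\gamma', \gamma')\, dt$. Using $Rc(\gamma', \gamma') = \rho - \nabla^2 f(\gamma', \gamma') = \rho - (f \circ \gamma)''$ and integrating the term $\int_0^L \phi^2 (f \circ \gamma)''\, dt$ by parts twice, the boundary contributions vanish because $\phi(0) = \phi(L) = 0$, and the surviving $f$-dependent integral is bounded by a multiple of $k$ that decays like $1/L$. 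The net effect is an estimate of the form
$$0 \leq \sum_i I(\phi E_i, \phi E_i) \leq (n-1)\frac{\pi^2}{2L} - \rho\frac{L}{2} + \frac{Ck}{L},$$
so the term linear in $L$ forces $L^2 \leq (\,(n-1)\pi^2 + 2Ck\,)/\rho$. Hence $\widetilde{\mathcal{M}}$ has bounded diameter, and being complete it is compact by Hopf--Rinow.

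The main obstacle is precisely the Hessian term $\nabla^2 f$: the ordinary Ricci curvature need not be bounded below by a positive constant, so Myers' theorem does not apply verbatim, and Theorem 2.2 (positivity of $R$) alone is not enough. Everything hinges on rewriting $Rc(\gamma', \gamma')$ along the geodesic as $\rho - (f \circ \gamma)''$ and disposing of the second-derivative term by integration by parts, which is exactly the step that consumes the boundedness of the lifted potential. Once the diameter bound is secured, the topological conclusion --- compact universal cover, finitely many sheets, hence finite $\pi_1$ --- follows immediately.
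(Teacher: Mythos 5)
Your proof is correct, but note that the paper contains no proof of this statement to compare against: Theorem 2.3 is stated with citations to Lott (compact case) and Wylie (complete case) and is never proved in the text. Judged on its own, your argument is complete, and it is the standard short proof in the compact setting: lift the soliton equation to the universal cover, observe that compactness of $\mathcal{M}$ gives a uniform bound $|\widetilde{f}| \leq k$ on the (a priori noncompact) cover, and run the Myers index-form computation with test fields $\sin(\pi t/L)E_i$, disposing of the Hessian term by integrating by parts twice against $\phi^2$; the boundary terms vanish, the surviving term is of size $O(k/L)$, and the resulting inequality $\rho L^2 \leq (n-1)\pi^2 + 2Ck$ bounds the diameter of $\widetilde{\mathcal{M}}$, so the cover is compact by Hopf--Rinow and $\pi_1(\mathcal{M})$ is finite. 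Two remarks comparing your route with the cited sources. First, the bound on $|\nabla f|$ you mention is never actually used in your argument --- only the bound on $f$ enters --- whereas Lott's original proof genuinely uses the gradient bound: he converts $Rc + \nabla^2 f = \rho g$ together with $|\nabla f| \leq C$ into a positive finite-dimensional Bakry--\'Emery bound for large $q$, realizes that as ordinary positive Ricci curvature on a warped product $\mathcal{M} \times_{e^{-f/q}} S^q$, and applies the classical Myers theorem there. Second, Wylie's extension to complete noncompact shrinkers cannot proceed as you do, since $f$ is then unbounded and your integration-by-parts step collapses; that case requires genuine growth estimates on $f$ along geodesics. Your approach buys exactly what the paper needs: a short, self-contained, elementary proof in the compact case.
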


 Note in particular that the first betti number $b_1$ of such a manifold must vanish. This is our first example of a topological obstruction to the existence of such a geometric structure. We now proceed to review the next ingredient in our study: topological invariants coming from aspects of differential topology peculiar to four dimensions.

 \section{Oriented Homotopy Invariants on Smooth Four-Manifolds}

    This section closely follows LeBrun \cite{lebrun}, and is included as a convenience to the reader. Be aware that any differences in notation or errors in this section - which I have done my best to avoid - are mine, and do not result from my drawing on this source for background knowledge. Differential geometry in four dimensions is different. This is because the real six-dimensional bundle $\Lambda^2(\mathcal{M}^4)$ of two forms on a four-dimensional smooth manifold decomposes into two real three-dimensional bundles:
    
    $$\Lambda^2(\mathcal{M}^4) = \Lambda^+ \oplus \Lambda^-$$

    The most concrete way to think of this decomposition involves the hodge star operator $*$. In dimension four, this operator satisfies $*^2 = 1$. Thus as a map $*: \Lambda^2(\mathcal{M}^4) \rightarrow \Lambda^2(\mathcal{M}^4)$ decomposes $\Lambda^2(\mathcal{M}^4)$ into $\pm 1$-eigenspaces of dimension three ($\Lambda^+$ and $\Lambda^-$ respectfully). 

    \begin{defn}
    Sections of $\Lambda^+$ will be called self-dual 2-forms and sections of $\Lambda^-$ will be called anti-self-dual 2-forms. 
    \end{defn}

    Now, let $\mathcal{M}^4$ be a compact oriented 4-manifold. Recall that one has a cup product in integral cohomology:

    $$\cup: H^2(\mathcal{M}^4; \mathbb{Z}) \times H^2(\mathcal{M}^4; \mathbb{Z})  \rightarrow H^4(\mathcal{M}^4; \mathbb{Z}) \cong \mathbb{Z}$$

    where the last isomorphism arises from Poincare duality and the fact that $\mathcal{M}^4$ is a compact, connected 4-manifold. Mod torsion, this bilinear pairing may be expressed as a $b_2$ by $b_2$ matrix $\tau$ of determinant $\pm 1$ with integer entries called the $\emph{intersection form}$ of $\mathcal{M}^4$. The name intersection form was coined with an eye towards the following concrete interpretation:

    \begin{thm}

    Let $(\mathcal{M}^4, g)$ be a smooth, compact, connected, oriented Riemannian 4-manifold. Let $\alpha, \beta \in H^2(\mathcal{M}^4; \mathbb{Z})$ and let $a, b$ be smoothly embedded oriented 2-manifolds representing the Poincare duals of $\alpha, \beta$. Assume without loss of generality that $a$ and $b$ intersect transversally at finitely many points. Now, $a$ and $b$ have orientations, and the direct sum of their tangent spaces at an intersection point is the tangent space of $\mathcal{M}^4$. For each intersection point, associate either a $+1$ or a $-1$ depending on if the orientation on $T_p\mathcal{M}^4$ inherited from the embedded surfaces by direct sum agrees or disagrees with the given orientation on $\mathcal{M}^4$ and sum these values. The sum will be $\tau(\alpha, \beta)$. 

    \end{thm}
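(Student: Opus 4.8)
The plan is to realize Poincaré duality at the level of differential forms and then localize the resulting integral to the intersection points. First I would record that the stated normalization costs no generality: since $\dim a + \dim b = 2 + 2 = 4 = \dim \mathcal{M}^4$, the transversality theorem guarantees that representatives of the two homology classes can be perturbed to meet transversally, forcing the intersection locus to be $0$-dimensional and hence finite by compactness. Because the quantity we are computing is a homology invariant, it is unchanged under such a perturbation, so we may freely assume $a$ and $b$ are transverse.

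Next I would invoke the de Rham description of Poincaré duality. To the embedded oriented surface $a$ one associates a closed $2$-form $\eta_a$, supported in an arbitrarily small tubular neighborhood of $a$, whose cohomology class is $\alpha$ and which satisfies the defining identity $\int_a \omega = \int_{\mathcal{M}} \omega \wedge \eta_a$ for every closed $2$-form $\omega$; concretely $\eta_a$ is built from a Thom form of the oriented rank-$2$ normal bundle of $a$, whose fiber integral equals $1$. Choosing such representatives $\eta_a,\eta_b$ and recalling that the cup product corresponds to the wedge product in de Rham cohomology, the intersection form is computed by
$$\tau(\alpha,\beta) = \langle \alpha \cup \beta, [\mathcal{M}] \rangle = \int_{\mathcal{M}} \eta_a \wedge \eta_b = \int_b \eta_a,$$
where the final equality applies the defining property of $\eta_b$ to the test form $\omega = \eta_a$.

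The heart of the argument is then a purely local computation. Since $\eta_a$ is supported near $a$, the integral $\int_b \eta_a$ receives contributions only from a neighborhood of each point where $b$ crosses $a$, that is, from exactly the finitely many transverse intersection points. At such a point $p$ I would choose oriented local coordinates adapted to the splitting $T_p\mathcal{M}^4 = T_p a \oplus T_p b$ (a genuine direct sum by transversality together with the dimension count), in which $b$ appears as a small coordinate disk crossing the normal fiber of $a$ exactly once; the fiber-integral normalization of the Thom form then forces $\int_{b \cap U} \eta_a = \pm 1$. Summing these local contributions over all intersection points produces the signed count asserted in the statement.

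The main obstacle I expect is the orientation bookkeeping, namely verifying that the sign emitted by the Thom-form integral at each point is literally the $+1$ or $-1$ prescribed in the theorem, according as the orientation induced on $T_p\mathcal{M}^4$ by $T_p a \oplus T_p b$ agrees or disagrees with the ambient orientation. Making this match requires fixing compatible conventions for the orientation of the normal bundle of $a$ relative to those of $a$ and $\mathcal{M}^4$, and for the order of the wedge product, so that the fiber-integral sign and the direct-sum orientation comparison coincide exactly. Once these conventions are pinned down, the remaining ingredients — existence of the Thom representative and independence of the final answer from the chosen transverse representatives — are standard.
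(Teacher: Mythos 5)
The paper never actually proves this statement: it appears in Section 3 as quoted classical background (the section explicitly follows LeBrun), with no proof environment attached, so there is no argument of the paper's to compare yours against. Your proposal is the standard textbook proof (in the style of Bott--Tu): represent the Poincar\'e duals by Thom forms $\eta_a, \eta_b$ supported in tubular neighborhoods, identify the cup-product pairing with $\int_{\mathcal{M}} \eta_a \wedge \eta_b = \int_b \eta_a$, and localize the integral to the transverse intersection points, where the fiber normalization of the Thom form yields $\pm 1$. The outline is correct, and it is the proof one would supply here. Two points deserve the care you only partly flag. First, the localization step needs a compactness argument: $b$ minus small disks about the intersection points is compact and disjoint from $a$, hence has positive distance from $a$, so the tubular neighborhood can be shrunk until $b \cap U$ consists solely of one disk through each intersection point; moreover, by transversality each such disk is a graph over the normal fiber, which is what makes the fiber integral equal to $\pm 1$ rather than some other integer. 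Second, the sign identification --- that the local contribution is $+1$ precisely when the orientation of $T_p a \oplus T_p b$ agrees with the ambient one --- is the actual content of the theorem, not a peripheral convention issue, so a complete write-up must carry out that computation in adapted oriented coordinates rather than defer it. Neither point is a conceptual gap; both are standard, and your strategy closes them in the expected way.
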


    We may express these ideas in the language of forms if we're willing to move to $\mathbb{R}$-coefficients. The ability to work with forms is well worth the coefficient change. Recall that the Hodge theorem states that each de-Rham cohomology class $[\omega] \in H^2(\mathcal{M}^4;\mathbb{R})$ admits a harmonic representative, that is, a two-form $\phi$ representing the given cohomology class such that $d\phi = d *\phi = 0$. Recall also that, in the de-Rham cohomology, the cup product with $\mathbb{R}$ coefficients is simply the wedge product and the poincare-duality isomorphism is given by integrating top-forms over the manifold. Thus, we may consider the intersection pairing on harmonic 2-forms $\alpha, \beta$:

    $$\tau(\alpha, \beta) = \int_{\mathcal{M}} \alpha \wedge \beta$$

    Furthermore, $*$ descends to a map between harmonic two forms, since $$H^2(\mathcal{M}^2; \mathbb{R}) \cong \mathcal{H}^2(\mathcal{M}^4; \mathbb{R}) = \{\phi \in \Lambda^2(\mathcal{M}^4) : d\phi=0, d*\phi = 0\},$$

    and $*$ respects the set-membership condition above since $*^2=1$. Thus as above we have a decomposition $\mathcal{H} \cong \mathcal{H}^+ \oplus \mathcal{H}^-$ into self-dual and anti-self-dual harmonic 2-forms. Note that the harmonic condition may be relaxed to $d\phi=0$ in either summand because elements of each summand are eigenforms of $*$. Further note that $*$ is a conformal invariant in the middle dimension so the decomposition remains unchanged given a conformal scaling of the metric. $\tau$ becomes positive definite when restricted to $\mathcal{H}^+$ and negative-definite when restricted to $\mathcal{H}^-$, thus we may pick $L^2$-orthonormal bases for both subspaces so $\tau$ is expressed by $diag(+1,...,+1, -1,...,-1)$ with $b^+$ ones and $b^-$ minus ones. $b^\pm$ are oriented homotopy invariants of $M^4$. The $\emph{signature}$ of $\mathcal{M}^4$ is $b^+ - b^-$ and, by abuse of notation, is also denoted $\tau$. Like the Euler characteristic, this is an oriented homotopy invariant of $\mathcal{M}^4$. 

    In fact, $\chi$ and $\tau$ ($b^+$ and $b^-$) completely characterize the oriented homeotype of a compact, simply-connected 4-manifold: 

    \begin{thm}
    (Freedman \cite{freed}) Two smooth simply-connected, oriented, compact 4-manifolds are orientedly homeomorphic if and only if they have the same $\chi$ and $\tau$ (e.g. they have the same $b^+$ and $b^-$), and both are spin or both are non-spin.

    \end{thm}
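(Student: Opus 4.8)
Since this is Freedman's celebrated topological classification theorem, I cannot hope to reproduce a full proof; the plan is instead to sketch the architecture of the argument and to isolate the single deep input on which everything rests. The overall strategy is to factor the problem into a purely algebraic/homotopy-theoretic part and a hard geometric part. First I would record the homotopy classification due to Whitehead and Milnor: the oriented homotopy type of a closed, simply-connected $4$-manifold is completely determined by its intersection form, that is, by the unimodular symmetric bilinear form $\tau$ over $\mathbb{Z}$ described above. Under this reduction, the real content of Freedman's theorem becomes the upgrade from oriented homotopy equivalence to oriented homeomorphism.

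Next I would translate the hypotheses ``$\chi$, $\tau$, and spin type all agree'' into ``the intersection forms are isomorphic.'' For a closed simply-connected $4$-manifold one has $\chi = 2 + b^+ + b^-$ and signature $b^+ - b^-$, so agreement of $\chi$ and of the signature is precisely agreement of the pair $b^\pm$, i.e. of the rank and signature of the form; the spin condition encodes the parity of the form, since such a manifold is spin exactly when its intersection form is even. For \emph{indefinite} forms, the classical arithmetic classification of unimodular forms (Serre) then guarantees that rank, signature, and parity determine the form up to isomorphism, which is the regime relevant to the positive-$b^+$ manifolds we care about. (For definite forms this algebraic step genuinely fails, as the even forms $E_8 \oplus E_8$ and $\Gamma_{16}$ show, so the statement as phrased is really the indefinite assertion.)

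With the algebra in hand, the remaining task --- by far the hardest --- is to realize an abstract isomorphism of intersection forms by an actual homeomorphism. The engine is the $4$-dimensional \emph{topological disk embedding theorem}: an immersed disk in a simply-connected $4$-manifold, carrying the correct framing data and an algebraically dual family of spheres, can be replaced by a \emph{flat} topologically embedded disk. I would prove this along Freedman's lines, via Casson's construction of infinite thickened towers --- \emph{Casson handles} --- followed by the decomposition-space (Bing-style) analysis and an infinite reembedding/convergence argument showing that each Casson handle is homeomorphic to the standard open handle $D^2 \times \mathbb{R}^2$. This is the step I expect to be the main obstacle, and it is exactly why the theorem is quoted rather than derived here: it is transcendental to the smooth and PL categories and constitutes the overwhelming bulk of Freedman's work.

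Granting the disk embedding theorem, the endgame is essentially surgery theory. Topological Whitney moves become available, which yields topological surgery and, for simply-connected manifolds, the topological $h$-cobordism theorem in dimension four. I would then use surgery to build an $h$-cobordism between two closed simply-connected $4$-manifolds sharing an intersection form --- the relevant surgery obstruction vanishing precisely because the forms are isomorphic --- and finally invoke the $4$-dimensional topological $h$-cobordism theorem to conclude that this $h$-cobordism is a product, whence the two manifolds are orientedly homeomorphic.
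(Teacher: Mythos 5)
The paper does not actually prove this theorem: it is quoted from Freedman's work as background material, so there is no in-paper argument to compare against. Judged on its own merits, your sketch of the architecture is the standard one and is accurately rendered: the Whitehead--Milnor homotopy classification by the intersection form, the arithmetic classification of indefinite unimodular forms, the disk embedding theorem via Casson handles and decomposition-space theory, and the resulting topological surgery and $h$-cobordism machinery in dimension four.

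There is, however, one genuine gap, and it sits exactly where the smoothness hypothesis does real work. Your parenthetical claim that ``the statement as phrased is really the indefinite assertion'' is wrong: the statement is about \emph{smooth} manifolds, and it is true as phrased, definite forms included. The missing ingredient is Donaldson's diagonalization theorem: a smooth compact simply-connected $4$-manifold with definite intersection form has form $\langle 1 \rangle^{\oplus n}$ or $\langle -1 \rangle^{\oplus n}$, which \emph{is} determined by rank and signature. The exotic even definite forms you cite ($E_8 \oplus E_8$ versus $\Gamma_{16}$) are simply never realized by smooth manifolds --- note that Rokhlin's theorem does not exclude them, since both have signature $16$; only Donaldson does --- so they cannot produce a counterexample among smooth manifolds. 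The correct completion of your argument is therefore a case split: if $b^+$ and $b^-$ are both positive, Serre's classification identifies the two indefinite forms; if either vanishes, Donaldson forces both forms to be diagonal, hence isomorphic; in either case Freedman's machinery, as you sketched it, finishes the proof. This matters for the present paper, where the definite case is not a fringe regime: $S^4$ and $\mathbb{CP}^2$, which have trivial and definite intersection forms respectively, are among the basic examples of compact gradient shrinking Ricci solitons to which this classification gets applied.
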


    An oriented smooth Riemannian 4-manifold is called $\emph{spin}$ if the principle $SO(4)$ frame bundle of $\mathcal{M}^4$ admits a lifting to a principle $Spin(4)$ (the Lie-group double-cover of $SO(4)$) bundle such that modding out by the covering map fiber-wise yields the frame bundle once again. This could be written as a commutative diagram if one wishes. Such a lifting is called a spin structure. The only topological obstruction to a spin structure is the second Stiefel-Whitney class of the tangent bundle $w_2 \in H^2(\mathcal{M}^4;\mathbb{Z}_2)$. It is a basic exercise in obstruction theory to show that $M^4$ admits a spin structure if and only if $w_2$ vanishes. 
    
    These numbers ($\tau$, $\chi$, $b^+$, $b^-$,...) will play a crucial role in providing obstructions to the existence of gradient shrinking Soliton metrics. These are global topological invariants of $\mathcal{M}^4$, despite the fact that the data provided by the soliton equation is local. Luckily, there are results which relate these global quantities to integrals over $\mathcal{M}^4$ of local quantities. We briefly describe the structure responsible for these integral formulas.

    On a given oriented compact $(\mathcal{M}^4, g)$, one has the curvature operator $\mathcal{R}: \Lambda^2 \rightarrow \Lambda^2$. With respect to the decomposition into self-dual/anti-self-dual 2-forms, $\mathcal{R}$ admits the following decomposition:

    $$\mathcal{R} = \begin{pmatrix}
W^+ + \frac{R}{12} & \mathring{Rc} \\
\mathring{Rc} & W^- + \frac{R}{12} 
\end{pmatrix}$$

where $R$ is the scalar curvature, $\mathring{Rc}$ is the traceless Ricci tensor, and $W^{\pm}$ are the restrictions of the Weyl tensor's action $W: \Lambda^2 \rightarrow \Lambda^2$ to $\Lambda^\pm$ respectively. In coordinates, the actions are given by:

$$W^\pm(\phi) = W^\pm_{abcd}\phi_{cd}$$
$$R\phi = R\phi_{cd}$$
$$\mathring{Rc}(\phi) = \mathring{Rc}_{ac}\phi^{c}_{b} - \mathring{Rc}_{bc}\phi^{c}_{a}$$

restricted appropriately to the relevant subspaces suggested by the block decomposition of the matrix representing $\mathcal{R}$. Clever use of this decomposition yields the 4-dimensional Chern-Gauss-Bonnet formula and the Hirzebruch signature formula, which give integrals relating local quantities to global topological invariants as desired ($\mathring{Rc}$ is the traceless Ricci tensor of $g$) \cite{ernani}:

$$\chi(\mathcal{M}^4) = \frac{1}{8\pi^2}\int_{\mathcal{M}} |W^+|^2 + |W^-|^2 + \frac{R^2}{24} - \frac{|\mathring{Rc}|^2}{2}dV_g$$

$$\tau({\mathcal{M}^4}) = \frac{1}{12\pi^2}\int_{\mathcal{M}} |W^+|^2 - |W^-|^2 dV_g$$

Of great interest in the present work is the following:

$$2\chi \pm 3|\tau| = \frac{1}{4\pi^2}\int_{\mathcal{M}} 2|W^\pm|^2 + \frac{R^2}{24} - \frac{|\mathring{Rc}|^2}{2}dV_g$$

Our next section is devoted to applying this particular integral to provide topological obstructions to the existence of compact gradient soliton metrics. 

\section{The Hitchin-Thorpe Inequality}

    Let $\mathcal{M}^4$ be a compact oriented four-manifold. The subject of this section is the Hithcin-Thorpe inequality, which is an inequality involving oriented homotopy invariants on $\mathcal{M}^4$, namely $2\chi \pm 3|\tau| \geq 0$. Our first order of business is to demonstrate that this inequality provides an obstruction to the existence of an Einstein metric on $\mathcal{M}^4$. 

    \begin{thm}
    Let $(\mathcal{M}^4, g)$ be a compact oriented four-dimensional Einstein manifold. Then $\mathcal{M}^4$ satisfies the Hitchin-Thorpe inequality. 
    \end{thm}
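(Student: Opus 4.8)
The plan is to apply directly the integral identity for $2\chi \pm 3|\tau|$ recorded at the end of the previous section, namely
$$2\chi \pm 3|\tau| = \frac{1}{4\pi^2}\int_{\mathcal{M}} 2|W^\pm|^2 + \frac{R^2}{24} - \frac{|\mathring{Rc}|^2}{2}\, dV_g,$$
and to observe that the Einstein hypothesis forces the only term of indefinite sign in the integrand to vanish. Since the integrand is a pointwise expression in the curvature, the whole argument reduces to a pointwise non-negativity statement followed by integration.

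First I would unpack the Einstein condition $Rc = \rho g$. Taking the trace in dimension four yields $R = 4\rho$, so that $\rho = R/4$ and the traceless Ricci tensor satisfies $\mathring{Rc} = Rc - \frac{R}{4}g = \rho g - \rho g = 0$ identically on $\mathcal{M}^4$. Consequently the term $-\frac{|\mathring{Rc}|^2}{2}$ in the integrand vanishes everywhere, and what remains of the integrand is $2|W^\pm|^2 + \frac{R^2}{24}$, a sum of squares and hence pointwise non-negative.

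Integrating this non-negative function over $\mathcal{M}^4$ against the volume form $dV_g$ yields a non-negative number, and since the prefactor $\frac{1}{4\pi^2}$ is positive I conclude $2\chi \pm 3|\tau| \geq 0$, which is the Hitchin-Thorpe inequality. Applying the identity for both choices of sign simultaneously records the full content as $2\chi \geq 3|\tau|$, with the binding case being the one in which $3|\tau|$ is subtracted.

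There is no genuine obstacle here: the real work has already been front-loaded into deriving the Chern-Gauss-Bonnet and Hirzebruch signature formulas and their combination into the displayed identity, all of which are available from the previous section. The only point requiring a modicum of care is bookkeeping the $\pm$, in that the sign chosen in $|W^\pm|^2$ must match the sign chosen in $\pm 3|\tau|$; once that is kept straight, the Einstein condition does all of the remaining work by annihilating $\mathring{Rc}$.
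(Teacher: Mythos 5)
Your proposal is correct and follows essentially the same route as the paper: both deduce $\mathring{Rc} = 0$ from the Einstein condition via the trace identity $R = 4\rho$, and then conclude non-negativity of $2\chi \pm 3|\tau|$ directly from the integral formula, whose remaining integrand $2|W^\pm|^2 + \frac{R^2}{24}$ is a sum of squares. Your added remark about matching the sign of $|W^\pm|^2$ to the sign in $\pm 3|\tau|$ is a correct piece of bookkeeping that the paper leaves implicit.
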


    \begin{proof}

    Given that $(\mathcal{M}^4, g)$ is an Einstein manifold, by definition there exists $\rho \in \mathbb{R}$ such that 

    $$Rc_g = \rho g$$

    We demonstrate that the traceless Ricci tensor of $g$ vanishes. Recall that the traceless Ricci tensor $\overset{\circ}{Rc} = Rc - \frac{R}{4}g$, and the trace of the soliton equation yields $R = 4\rho$. Thus, we have

    $$\overset{\circ}{Rc} = \rho g - \frac{4\rho}{4}g = \rho(g - g) = 0.$$

    Now, using the local-to-global integral above:

    $$2\chi \pm 3|\tau| = \frac{1}{4\pi^2}\int_{\mathcal{M}} 2|W^\pm|^2 + \frac{R^2}{24} - \frac{|\mathring{Rc}|^2}{2}dV_g = \frac{1}{4\pi^2}\int_{\mathcal{M}} 2|W^\pm|^2 + \frac{R^2}{24}dV_g \geq 0$$

    and thus the Hitchin-Thorpe inequality holds on $(\mathcal{M}^4, g)$.

    \end{proof}

    Thus, the Hitchin-Thorpe inequality provides a topological obstruction to the existence of Einstein metrics on $\mathcal{M}^4$. In particular, the Hitchin-Thorpe inequality yields a topological obstruction to the existence of steady and expanding Ricci solitons on $\mathcal{M}^4$. Given that one would like a topological obstruction to the existence of soliton structures in general, Cao \cite{caoThree} proposed the following:

    \begin{quest}
    Must the Hitchin-Thorpe inequality also hold for compact gradient shrinking Ricci solitons $(\mathcal{M}^4, g)$? 
    \end{quest}

    This question remains unresolved in general, but there is partial progress in this direction in the form of the sufficient conditions which we shall prove (or state) below.
    
\begin{rem}
In all that follows - given scale invariance of the soliton equation - we normalize $\rho = \frac{1}{2}$. 
\end{rem}

    \begin{thm} (Klatt \cite{klatt})
    Let $(\mathcal{M}^4, g)$ be a compact spin gradient shrinking Ricci soliton. Then $\mathcal{M}^4$ satisfies the Hitchin-Thorpe inequality. 
    \end{thm}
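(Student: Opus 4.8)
The plan is to leverage the \emph{spin} hypothesis through the Dirac operator, together with the two structural facts about compact gradient shrinking Ricci solitons recorded above --- that $R>0$ everywhere (Chen) and that $\pi_1$ is finite (Lott). The guiding observation is that a compact spin four-manifold of positive scalar curvature must have vanishing signature, after which the Hitchin-Thorpe inequality degenerates into the trivial assertion $2\chi \geq 0$. Notably, this route bypasses the Chern-Gauss-Bonnet integral used in the Einstein case entirely: the troublesome $-\frac{1}{2}|\mathring{Rc}|^2$ term never needs to be estimated.

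First I would use the spin structure to form the Dirac operator $D$ acting on sections of the spinor bundle, and invoke the Lichnerowicz formula
$$D^2 = \nabla^*\nabla + \frac{R}{4}.$$
Pairing against a spinor $\psi$ and integrating over the compact manifold gives
$$\int_{\mathcal{M}} |D\psi|^2\, dV_g = \int_{\mathcal{M}} |\nabla\psi|^2 + \frac{R}{4}|\psi|^2\, dV_g.$$
Chen's theorem supplies $R \geq c > 0$ for some constant $c$ (using compactness), so the right-hand side is strictly positive unless $\psi \equiv 0$. Hence $D$ has trivial kernel; in particular the positive- and negative-chirality halves $D^\pm$ both have trivial kernel, and the index $\mathrm{ind}(D) = \dim\ker D^+ - \dim\ker D^-$ vanishes.

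Next I would appeal to the Atiyah-Singer index theorem, which in dimension four identifies $\mathrm{ind}(D)$ with the $\hat{A}$-genus. Using the Hirzebruch signature theorem $p_1 = 3\tau$ one has $\hat{A}(\mathcal{M}^4) = -\frac{1}{24}p_1 = -\frac{1}{8}\tau$, so the vanishing of the index forces $\tau = 0$. With $\tau = 0$ the orientation-dependent ambiguity in $2\chi \pm 3|\tau|$ disappears and both signs collapse to $2\chi \geq 0$. To finish, I would verify $\chi > 0$ directly: Lott's theorem gives finite $\pi_1$, hence $b_1 = 0$, hence $b_3 = 0$ by Poincar\'e duality, so that $\chi = 2 - 2b_1 + b_2 = 2 + b_2 \geq 2$. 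Combining, $2\chi \pm 3|\tau| = 2\chi \geq 4 > 0$, which is the Hitchin-Thorpe inequality.

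I expect the only step with genuine content to be the passage from positive scalar curvature to $\tau = 0$, and this is precisely where the spin hypothesis is indispensable --- without it there is no Dirac operator on which to run the Lichnerowicz argument. The sharpness of the hypothesis is visible in $\mathbb{CP}^2$, which carries a metric of positive scalar curvature yet has $\tau = 1 \neq 0$, exactly because it is not spin. Everything downstream of the identity $\mathrm{ind}(D) = -\tau/8$ is routine homological bookkeeping.
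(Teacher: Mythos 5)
Your proposal is correct and follows essentially the same route as the paper's proof: both reduce the problem to showing $\tau = 0$ via Lichnerowicz's vanishing theorem (spin plus $R>0$ from Chen's theorem forces $\hat{A} = -\tau/8 = 0$), and then conclude $2\chi \pm 3|\tau| = 2\chi \geq 0$ using Lott's finiteness of $\pi_1$ to get $b_1 = b_3 = 0$ and $\chi = 2 + b_2$. The only difference is that you unpack the Dirac-operator/index-theorem proof of Lichnerowicz's result, which the paper simply cites.
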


Given that the topology of compact spin four-manifolds is well understood, this result is not hard to prove. 

\begin{proof}
Let $(\mathcal{M}^4, g)$ be a compact spin gradient shrinking Ricci soliton. In particular, Licherowicz \cite{lich} has demonstrated that metrics with $R > 0$ cannot exist when $\mathcal{M}^4$ is spin and the signature $\tau = -8\hat{\mathcal{A}} \neq 0$ where $\hat{\mathcal{A}}$ is the Hirzebruch $\hat{\mathcal{A}}$-genus. Given theorem 2.2, since $\mathcal{M}^4$ is spin it follows immediately that $\tau = 0$. Using Poincare duality, the expression of the Euler characteristic in terms of betti numbers, and the fact that $b_1 = b_3 \equiv 0$ for gradient shrinking Ricci solitons by theorem 2.3 one finds that $\chi = 2 + b_2 \geq 0$. Thus the Hitchin-Thorpe inequality is satisfied trivially. 
\end{proof}

Another sufficient condition is the following, first noted by Cao (private communication). This condition was also proved independently in \cite{aldir}. 

\begin{thm} Let $(\mathcal{M}^4, g)$ be a compact gradient shrinking Ricci soliton. Then the Hitchin-Thorpe inequality holds given the following sufficient condition: $$\int_{\mathcal{M}^4} R^2 dV_g \leq 6 Vol(\mathcal{M}^4, g)$$

\end{thm}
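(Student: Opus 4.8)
The plan is to feed the hypothesis directly into the Chern--Gauss--Bonnet/signature combination recorded at the end of Section 3,
\[
2\chi \pm 3|\tau| = \frac{1}{4\pi^2}\int_{\mathcal{M}} 2|W^\pm|^2 + \frac{R^2}{24} - \frac{|\mathring{Rc}|^2}{2}\, dV_g.
\]
Since $2|W^\pm|^2 \ge 0$ pointwise, its integral only helps, so it suffices to control the remaining part of the integrand and show
\[
\int_{\mathcal{M}} R^2\, dV_g \ge 12 \int_{\mathcal{M}} |\mathring{Rc}|^2\, dV_g.
\]
The whole argument thereby reduces to evaluating the total $\int_{\mathcal{M}} |\mathring{Rc}|^2\, dV_g$ in terms of $\int_{\mathcal{M}} R^2\, dV_g$ and the volume.

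First I would exploit the soliton structure pointwise. Taking the trace-free part of the normalized soliton equation $Rc + \nabla^2 f = \tfrac{1}{2}g$ annihilates the pure-trace term and gives $\mathring{Rc} = -\mathring{\nabla^2 f}$, hence $|\mathring{Rc}|^2 = |\mathring{\nabla^2 f}|^2$. Splitting the Hessian into its trace-free and trace parts in dimension $n=4$ yields $|\mathring{\nabla^2 f}|^2 = |\nabla^2 f|^2 - \tfrac{1}{4}(\Delta f)^2$, so the target becomes
\[
\int_{\mathcal{M}} |\mathring{Rc}|^2\, dV_g = \int_{\mathcal{M}} |\nabla^2 f|^2\, dV_g - \frac{1}{4}\int_{\mathcal{M}} (\Delta f)^2\, dV_g.
\]

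Next I would integrate the Bochner formula for $f$ over the closed manifold $\mathcal{M}$. The total of $\Delta |\nabla f|^2$ vanishes, and integration by parts turns $\int \langle \nabla f, \nabla \Delta f\rangle$ into $-\int (\Delta f)^2$, producing the standard identity $\int |\nabla^2 f|^2 = \int (\Delta f)^2 - \int Rc(\nabla f, \nabla f)$. Each remaining scalar integral is then pinned down by the soliton identities already available: tracing the soliton equation gives $\Delta f = 2 - R$, and $\int_{\mathcal{M}} \Delta f = 0$ forces $\int_{\mathcal{M}} R\, dV_g = 2\,Vol(\mathcal{M}^4,g)$; meanwhile the contracted identity $\tfrac12 \nabla_i R = R_{ij}\nabla_j f$ derived in the proof of Theorem 2.1 gives $Rc(\nabla f, \nabla f) = \tfrac12 \langle \nabla R, \nabla f\rangle$, whose integral becomes $-\tfrac12 \int R\,\Delta f = -\tfrac12\int R(2-R)$ after one more integration by parts.

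Assembling these pieces — substituting $\int (\Delta f)^2 = \int (2-R)^2$, using $\int R = 2\,Vol(\mathcal{M}^4,g)$, and collecting terms — I expect everything to collapse to the clean formula
\[
\int_{\mathcal{M}} |\mathring{Rc}|^2\, dV_g = \frac{1}{4}\int_{\mathcal{M}} R^2\, dV_g - Vol(\mathcal{M}^4,g).
\]
Feeding this into the reduction inequality $\int R^2 \ge 12\int |\mathring{Rc}|^2$, the quadratic terms rearrange so that it is \emph{exactly} equivalent to $\int_{\mathcal{M}} R^2\, dV_g \le 6\,Vol(\mathcal{M}^4,g)$, which is precisely the hypothesis, closing the argument. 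The main obstacle is the bookkeeping in the third step: correctly tracking the dimension-dependent constant in the trace decomposition and verifying that the $Rc(\nabla f, \nabla f)$ integral genuinely reduces, via the contracted soliton identity, to an expression in $R$ and $Vol$ alone. Once that reduction is verified, the conclusion is immediate, and as a byproduct one sees that the hypothesis is sharp for this method, since the Weyl terms are the only slack discarded.
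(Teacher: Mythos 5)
Your proposal is correct, and the computations all check out. It follows the same overall architecture as the paper — plug a soliton integral identity into the curvature formula for $2\chi \pm 3|\tau|$ and discard the nonnegative Weyl term — and indeed your key formula $\int_{\mathcal{M}} |\mathring{Rc}|^2 \, dV_g = \tfrac{1}{4}\int_{\mathcal{M}} R^2 \, dV_g - Vol(\mathcal{M},g)$ is precisely the paper's Lemma 4.5, $\int_{\mathcal{M}} \tfrac{1}{2}R^2 - |Rc|^2 \, dV_g = Vol(\mathcal{M},g)$, rewritten via $|\mathring{Rc}|^2 = |Rc|^2 - \tfrac{1}{4}R^2$. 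Where you genuinely differ is in how that identity is derived. The paper evaluates the single pairing $I = \int_{\mathcal{M}} \langle \tfrac{1}{2}g - Rc, Rc\rangle \, dV_g$ in two ways: directly (using $\int R = 2\,Vol$), and by substituting $\nabla^2 f = \tfrac{1}{2}g - Rc$ and integrating by parts twice with the contracted Bianchi identity $div(Rc) = \tfrac{1}{2}\nabla R$. You instead go through the trace-free Hessian decomposition, the integrated Bochner formula $\int |\nabla^2 f|^2 = \int (\Delta f)^2 - \int Rc(\nabla f, \nabla f)$, and the auxiliary soliton identity $\tfrac{1}{2}\nabla_i R = R_{ij}\nabla_j f$ from the proof of Theorem 2.1. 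The paper's route is shorter and needs only the Bianchi identity; your route costs more bookkeeping (which you carried out correctly) but buys two things the paper's proof leaves implicit: it isolates the theorem as the single inequality $\int R^2 \geq 12 \int |\mathring{Rc}|^2$, and it shows this inequality is \emph{exactly equivalent} to the hypothesis $\int R^2 \leq 6\,Vol$, so the only slack in the method is the discarded Weyl term. That sharpness observation is one the paper itself only makes later, in its remark that the Hitchin--Thorpe inequality for shrinkers is equivalent to $\int_{\mathcal{M}} 24|W^\pm|^2 - R^2 + 6 \, dV_g \geq 0$.
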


In order to prove this theorem, we begin with a lemma. 

\begin{lem}
    For a compact Gradient Shrinking Ricci Soliton $(\mathcal{M}^4, g, f)$, the following holds:

    $$\int_{\mathcal{M}} \frac{1}{2} R^2 - |Rc|^2 \,dV_g = Vol(\mathcal{M}, g) $$
\end{lem}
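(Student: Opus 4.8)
The plan is to expose a divergence structure in the left-hand integrand using the soliton equation, and then to reduce the whole computation to the two scalar integrals $\int_{\mathcal M} R\,dV_g$ and $\int_{\mathcal M} R^2\,dV_g$, both of which are controlled by tracing. Throughout I work with the normalization $\rho=\tfrac12$, so the soliton equation reads $R_{ij}+\nabla_i\nabla_j f=\tfrac12 g_{ij}$, and I specialize to $n=4$.

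First I would take the trace of the soliton equation to obtain $R+\Delta f=2$, i.e. $\Delta f=2-R$. Since $\mathcal M$ is compact and closed, integrating this identity and using $\int_{\mathcal M}\Delta f\,dV_g=0$ yields the first key relation $\int_{\mathcal M} R\,dV_g=2\,\mathrm{Vol}(\mathcal M,g)$. This is the identity I will feed in at the very end.

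Next I would contract the soliton equation with the Ricci tensor $R^{ij}$, which gives $|Rc|^2+R^{ij}\nabla_i\nabla_j f=\tfrac12 R$, so that the target integrand appears directly as $R^{ij}\nabla_i\nabla_j f=\tfrac12 R-|Rc|^2$. Integrating over $\mathcal M$ and integrating by parts (no boundary terms, by compactness) converts $\int_{\mathcal M} R^{ij}\nabla_i\nabla_j f\,dV_g$ into $-\int_{\mathcal M}(\nabla_i R^{ij})\nabla_j f\,dV_g$; applying the contracted second Bianchi identity $\nabla_i R^{ij}=\tfrac12\nabla^j R$ (already used in the proof of Theorem 2.1) reduces this to $-\tfrac12\int_{\mathcal M}\langle\nabla R,\nabla f\rangle\,dV_g$. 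A second integration by parts together with the substitution $\Delta f=2-R$ then expresses this purely in terms of $\int R$ and $\int R^2$, namely it equals $\int_{\mathcal M} R\,dV_g-\tfrac12\int_{\mathcal M} R^2\,dV_g$.

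Finally I would assemble the pieces. Equating the two evaluations of $\int_{\mathcal M} R^{ij}\nabla_i\nabla_j f\,dV_g$ gives the linear relation $\tfrac12\int_{\mathcal M} R\,dV_g-\int_{\mathcal M}|Rc|^2\,dV_g=\int_{\mathcal M} R\,dV_g-\tfrac12\int_{\mathcal M} R^2\,dV_g$, which rearranges to $\int_{\mathcal M}\bigl(\tfrac12 R^2-|Rc|^2\bigr)\,dV_g=\tfrac12\int_{\mathcal M} R\,dV_g$. Substituting $\int_{\mathcal M} R\,dV_g=2\,\mathrm{Vol}(\mathcal M,g)$ closes the argument. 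The main obstacle is purely bookkeeping: carrying the normalization constant $\rho=\tfrac12$ and the dimension $n=4$ correctly through two integrations by parts, invoking the contracted Bianchi identity with the right factor of $\tfrac12$, and confirming that every boundary term vanishes by compactness. There is no analytic subtlety here—no maximum principle or elliptic estimate is required—so once the divergence structure is exposed the lemma is an algebraic identity.
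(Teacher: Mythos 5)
Your proof is correct and is essentially identical to the paper's: contracting the soliton equation with $Rc$ (the paper phrases this as evaluating $\int_{\mathcal M}\langle \tfrac12 g - Rc, Rc\rangle\,dV_g$ two ways) and then integrating by parts twice via the contracted Bianchi identity and the traced equation $\Delta f = 2 - R$ is exactly the argument given for Lemma 4.5. The bookkeeping with $\rho = \tfrac12$, $n = 4$, and $\int_{\mathcal M} R\,dV_g = 2\,\mathrm{Vol}(\mathcal M,g)$ all checks out.
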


\begin{proof}
Consider the integral

$$ I = \int_{\mathcal{M}} \langle \frac{1}{2}g - Rc, Rc \rangle \,dV_g.$$

On one hand, direct evaluation yields the following:

$$\int_{\mathcal{M}} \langle \frac{1}{2}g - Rc, Rc \rangle \,dV_g = 
 \int_{\mathcal{M}} \frac{1}{2}R - |Rc|^2 \,dV_g = 
 Vol(\mathcal{M}, g) - \int_{\mathcal{M}} |Rc|^2  \,dV_g$$

On the other hand, using the soliton equation we may substitute:

$$\int_{\mathcal{M}} \langle \frac{1}{2}g - Rc, Rc \rangle \,dV_g = 
\int_{\mathcal{M}} \langle \nabla ^2 f, Rc \rangle \,dV_g$$

And using the divergence theorem, we evaluate 

$$I = - \int_{\mathcal{M}} \langle \nabla f, div(Rc) \rangle \,dV_g.$$

We may substitute further using the second contracted Bianchi identity:

$$I = - \frac{1}{2} \int_{\mathcal{M}} \langle \nabla f, \nabla R \rangle dV_g$$

Applying the divergence theorem again:

$$I = \frac{1}{2}\int_{\mathcal{M}} R \Delta f dV_g= \frac{1}{2}\int_{\mathcal{M}} R(2 - R) dV_g = \int_{\mathcal{M}} R dV_g - \frac{1}{2}\int_{\mathcal{M}} R^2 dV_g$$

Therefore, equating the two integrals:

$$I = 2Vol(\mathcal{M}, g) - \frac{1}{2}\int_{\mathcal{M}} R^2 dV_g = Vol(\mathcal{M}, g) - \int_{\mathcal{M}} |Rc|^2 dV_g$$

and hence 

 $$\int_{\mathcal{M}} \frac{1}{2} R^2 - |Rc|^2 \,dV_g = Vol(\mathcal{M}, g) $$

\end{proof}

We now proceed to prove the theorem. 

\begin{proof}

Suppose that the following holds on $(\mathcal{M}, g, f)$: 

$$\int_{\mathcal{M}} R^2 dV_g \leq 6Vol(\mathcal{M}, g).$$

Note that, upon substituting the traceless Ricci tensor with the ordinary one in the local-to-global theorem above one obtains:

$$2\chi(\mathcal{M}) \pm 3\tau(\mathcal{M}) = 
\frac{1}{8\pi^2} \int_{\mathcal{M}} 4|W^{\pm}|^2 - |Rc|^2 + \frac{1}{3}R^2 dV_g$$

In particular, 

$$\int_{\mathcal{M}} \frac{1}{3}R^2 dV_g - |Rc|^2 dV_g = 
\int_{\mathcal{M}} \frac{1}{2}R^2 dV_g - |Rc|^2 - \frac{1}{6}R^2 dV_g=
Vol(\mathcal{M}, g) - \frac{1}{6}\int_{\mathcal{M}} R^2 dV_g$$

Thus, by assumption:

$$2\chi(\mathcal{M}) - 3\tau(\mathcal{M}) =  \frac{1}{8\pi^2} (\int_{\mathcal{M}} 4|W^{-}|^2 dV_g + Vol(\mathcal{M},g) - \frac{1}{6}\int_{\mathcal{M}} R^2 dV_g) \geq 0$$

and 

$$2\chi(\mathcal{M}) + 3\tau(\mathcal{M}) =  \frac{1}{8\pi^2} (\int_{\mathcal{M}} 4|W^{+}|^2 dV_g + Vol(\mathcal{M},g) - \frac{1}{6}\int_{\mathcal{M}} R^2 dV_g) \geq 0$$

Therefore the Hitchin-Thorpe inequality is satisfied.

\end{proof}

In addition to these, Tadano \cite{tad} has given sufficient conditions in the form of diameter bounds involving the oscillation of the scalar curvature, and Cheng, Ribeiro Jr., and Zhou \cite{ernani} have furnished the following result:

\begin{thm}
If the potential function $f$ of a compact Gradient Shrinking Ricci Soliton $(\mathcal{M}^4, g, f)$ satisfies $f_{max} - f_{min} \leq log(5)$, then $\mathcal{M}^4$ satisfies the Hitchin-Thorpe inequality. 
\end{thm}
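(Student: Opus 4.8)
The plan is to reduce the claim to the integral inequality $\int_{\mathcal{M}} R^2\,dV_g\le 6\,Vol(\mathcal{M},g)$, which by the proof of Theorem 4.3 already forces the Hitchin-Thorpe inequality. Equivalently, Lemma 4.4 rearranges to $\int_{\mathcal{M}} R^2\,dV_g = 4\,Vol(\mathcal{M},g) + 4\int_{\mathcal{M}}|\mathring{Rc}|^2\,dV_g$, so the real target is the bound $\int_{\mathcal{M}}|\mathring{Rc}|^2\,dV_g\le\tfrac12 Vol(\mathcal{M},g)$ on the $L^2$-norm of the traceless Ricci tensor. The entire point of the argument will be that the hypothesis $f_{\max}-f_{\min}\le\log 5$ is precisely what controls this deviation of $g$ from being Einstein.

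First I would record the normalized soliton identities ($\rho=\tfrac12$, $n=4$). Tracing the soliton equation gives $R+\Delta f = 2$, so $\int_{\mathcal{M}} R\,dV_g = 2\,Vol(\mathcal{M},g)$: the scalar curvature has mean value $2$. Differentiating the soliton equation exactly as in the proof of Theorem 2.1 yields the pointwise identity $R+|\nabla f|^2 - f = C$ for a constant $C$; in particular $R\le f+C$ everywhere, so that $R$ is large only where $f$ is large, and $\max R \le 2 + (f_{\max}-f_{\min})$. Finally, the standard Bochner computation for the drift Laplacian $\Delta_f := \Delta - \langle\nabla f,\nabla\,\cdot\,\rangle$ gives $\Delta_f R = R - 2|Rc|^2$, whose unweighted integral is consistent with Lemma 4.4.

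The key analytic input comes from integrating $\Delta_f R = R-2|Rc|^2$ against the weight $e^{-f}$. Since $(\Delta_f R)\,e^{-f} = \operatorname{div}(e^{-f}\nabla R)$ integrates to zero, and since $|Rc|^2\ge\tfrac{R^2}{4}$, this produces the weighted inequality
\[ \int_{\mathcal{M}} R(R-2)\,e^{-f}\,dV_g\le 0. \]
This is the crucial sign reversal: although $\int_{\mathcal{M}} R(R-2)\,dV_g = \int_{\mathcal{M}}R^2\,dV_g-4\,Vol(\mathcal{M},g)\ge 0$ against the bare volume, the weighted integral is nonpositive, precisely because $R>2$ forces (via $R\le f+C$) the weight $e^{-f}$ to be small there. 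I would then split $\mathcal{M}$ into $\{R>2\}$ and $\{R<2\}$ and compare the weighted and unweighted integrals of $R(R-2)$. The hypothesis enters through $e^{\,f_{\max}-f_{\min}}\le 5$: since $e^{-f}$ varies by a factor of at most $5$, the nonpositivity of the weighted integral transfers, up to this factor, into an upper bound for $\int_{\mathcal{M}}R(R-2)\,dV_g$, and hence for $\int_{\mathcal{M}}R^2\,dV_g$.

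The main obstacle is extracting the \emph{sharp} constant. A crude comparison — replacing $e^{-f}$ by $e^{-f_{\min}}$ and $e^{-f_{\max}}$ separately on the two regions — only yields $\int_{\mathcal{M}}R^2\,dV_g\le 8\,Vol(\mathcal{M},g)$, which is too weak by a factor stemming from double-counting the weight. To reach the required $6\,Vol(\mathcal{M},g)$ one must use all available estimates simultaneously and non-wastefully: the exact balance $\int_{\mathcal{M}}(R-2)\,dV_g=0$ together with its weighted analogue $\int_{\mathcal{M}}(R-2)e^{-f}\,dV_g=-\int_{\mathcal{M}}|\nabla f|^2e^{-f}\,dV_g$, the pointwise bound $R-2\le f_{\max}-f_{\min}$ valid on $\{R>2\}$, and the elementary inequality $R(2-R)\le 1$ on $\{R<2\}$. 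Combining these must be arranged so that the value $\log 5$ is exactly what keeps the deficit $6\,Vol(\mathcal{M},g)-\int_{\mathcal{M}}R^2\,dV_g$ nonnegative; verifying that this optimization closes precisely at $f_{\max}-f_{\min}=\log 5$ is the delicate heart of the argument.
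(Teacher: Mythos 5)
Your setup is sound, and every individual identity you record is correct: the reduction to $\int_{\mathcal{M}} R^2\,dV_g \le 6\,Vol(\mathcal{M},g)$ via Theorem 4.4, the rearrangement $\int_{\mathcal{M}} R^2\,dV_g = 4\,Vol(\mathcal{M},g)+4\int_{\mathcal{M}}|\mathring{Rc}|^2\,dV_g$ of Lemma 4.5, the drift identity $\Delta_f R = R-2|Rc|^2$, and the weighted inequality $\int_{\mathcal{M}} R(R-2)e^{-f}\,dV_g\le 0$. (For comparison: the paper itself does not prove this theorem; it cites Cheng--Ribeiro--Zhou and records their key estimate as Lemma 4.9, which, unwound through Gauss--Bonnet and Lemma 4.5, is exactly the bound you are after, namely $\int_{\mathcal{M}}|\mathring{Rc}|^2\,dV_g\le\frac18\bigl(e^{f_{max}-f_{min}}-1\bigr)Vol(\mathcal{M},g)$.) But your proposal stops precisely where the theorem lives: you concede that your comparison argument only yields $8\,Vol(\mathcal{M},g)$ and that closing the gap to $6\,Vol(\mathcal{M},g)$ at $\log 5$ is "the delicate heart of the argument," without carrying it out. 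That is a genuine gap, not a finishing detail, and the auxiliary estimates you propose to optimize over (the pointwise bound $R-2\le f_{max}-f_{min}$, the weighted identity for $\int(R-2)e^{-f}$) are not what closes it.

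The missing idea is concrete: do not discard the traceless Ricci term when you pass from $\int_{\mathcal{M}}(R-2|Rc|^2)e^{-f}dV_g=0$ to $\int_{\mathcal{M}} R(R-2)e^{-f}dV_g\le0$. Writing $|Rc|^2=|\mathring{Rc}|^2+\tfrac{R^2}{4}$ and keeping everything, the weighted identity splits (using $R>0$ from Theorem 2.2, so all integrands below are nonnegative) as
$$4\int_{\mathcal{M}}|\mathring{Rc}|^2 e^{-f}dV_g + \int_{\{R>2\}}R(R-2)e^{-f}dV_g = \int_{\{R\le 2\}}R(2-R)e^{-f}dV_g.$$
Bounding the weight below by $e^{-f_{max}}$ on the left and above by $e^{-f_{min}}$ on the right gives
$$4\int_{\mathcal{M}}|\mathring{Rc}|^2 dV_g + \int_{\{R>2\}}R(R-2)\,dV_g \le e^{f_{max}-f_{min}}\int_{\{R\le2\}}R(2-R)\,dV_g.$$
Now substitute the unweighted identity $\int_{\{R>2\}}R(R-2)\,dV_g = \int_{\{R\le2\}}R(2-R)\,dV_g + 4\int_{\mathcal{M}}|\mathring{Rc}|^2\,dV_g$ (your observation that $\int_{\mathcal{M}} R(R-2)\,dV_g = 4\int_{\mathcal{M}}|\mathring{Rc}|^2\,dV_g$, which follows from Lemma 4.5 and $\int_{\mathcal{M}} R\,dV_g=2\,Vol$). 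The traceless Ricci term now appears twice on the left, and using $R(2-R)\le 1$ on $\{R\le2\}$ you get
$$8\int_{\mathcal{M}}|\mathring{Rc}|^2\,dV_g \le \bigl(e^{f_{max}-f_{min}}-1\bigr)\int_{\{R\le2\}}R(2-R)\,dV_g \le \bigl(e^{f_{max}-f_{min}}-1\bigr)Vol(\mathcal{M},g).$$
With $e^{f_{max}-f_{min}}\le 5$ this is exactly $\int_{\mathcal{M}}|\mathring{Rc}|^2\,dV_g\le\tfrac12 Vol(\mathcal{M},g)$, i.e. $\int_{\mathcal{M}} R^2\,dV_g\le 6\,Vol(\mathcal{M},g)$, and Theorem 4.4 finishes. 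The factor of $2$ you were missing does not come from a delicate optimization; it comes from the $|\mathring{Rc}|^2e^{-f}$ term you threw away, which sits on the favorable side of the weight comparison and doubles the coefficient of $\int_{\mathcal{M}}|\mathring{Rc}|^2\,dV_g$.
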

    
Given the present menagerie of sufficient conditions such as the ones stated above, one may inquire into the scope of these results. How many compact gradient shrinking Ricci solitons do these results apply to? The range of applicability of theorem 4.3 is already known well, since we recall that a smooth oriented four-manifold $\mathcal{M}^4$ is spin if and only if its second Stiefel-Whitney class vanishes. We begin to catch a glimpse of the efficacy of theorems 4.4 and 4.6 using results from $\emph{conformal geometry}$. We will see that these theorems in fact describe essentially the same restricted class of compact gradient shrinking Ricci solitons. Recall the Chern-Gauss-Bonnet formula for a Riemannian four-manifold (expressed in terms of the ordinary Ricci tensor using the definition of the traceless Ricci tensor):

$$8\pi^2 \chi(\mathcal{M}^4) = \int_{\mathcal{M}^4} |W|^2 dV_g + \int_{\mathcal{M}^4} \frac{1}{6}(R^2 - 3|Rc|^2) dV_g $$

The second integral is a conformal invariant in dimension four due to the conformal invariance of the first integral and the obvious conformal invariance of the left hand side. Following Chang \cite{chang}, we denote the integrand

$$\sigma_2(A_g) \equiv \frac{1}{6}(R^2 - 3|Rc|^2)$$ as it can be realized as the second symmetric polynomial acting on the eigenvalues of the Schouten tensor. Thus, the Chern-Gauss-Bonnet formula reads:


$$8\pi^2 \chi(\mathcal{M}) = \int_{\mathcal{M}} |W|^2 dV_g + \int_{\mathcal{M}^4} \sigma_2 dV_g$$

Denote the Yamabe invariant of a conformal class of metrics $[g]$ by $\mathcal{Y}(M, [g])$. We now define the following class of metrics with a view towards finding restrictions on metrics which satisfy the hypotheses of theorems 4.4 and 4.6.

\begin{defn}
Define the class $\mathcal{A} = \{g : \mathcal{Y}(M, [g]) > 0, \int_{\mathcal{M}} \sigma_2(A_g) dV_g > 0\}$
\end{defn}

The following analysis of metrics obeying the sufficient conditions in theorems 4.4 or 4.6 is split into two cases, the first case being that the case of strict inequality in both theorems and the second being the rigid case of equality in both theorems. We begin with the strict case. 

\begin{prop}
A normalized gradient shrinking Ricci soliton $(\mathcal{M}, g, f)$ satisfies

$$\int_{\mathcal{M}} R^2 dV_g < 6 Vol(\mathcal{M}, g).$$

if and only if $g \in \mathcal{A}$. 

\end{prop}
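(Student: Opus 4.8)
The plan is to peel the two defining conditions of the class $\mathcal{A}$ apart and show that, for a compact gradient shrinking Ricci soliton, one of them is automatic while the other is literally equivalent to the scalar-curvature bound. Recall that $g \in \mathcal{A}$ means both $\mathcal{Y}(\mathcal{M}, [g]) > 0$ and $\int_{\mathcal{M}} \sigma_2(A_g)\,dV_g > 0$. First I would dispose of the Yamabe condition: by Theorem 2.2 (Chen) the scalar curvature of any compact gradient shrinking Ricci soliton is strictly positive, and, as noted immediately after that theorem, positivity of $R$ forces the Yamabe invariant $\mathcal{Y}(\mathcal{M}, [g])$ to be positive. Consequently the clause $\mathcal{Y}(\mathcal{M},[g]) > 0$ holds for every soliton under consideration and can never be the binding constraint; membership in $\mathcal{A}$ therefore hinges solely on the sign of $\int_{\mathcal{M}} \sigma_2(A_g)\,dV_g$.

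The heart of the argument is a short computation that converts this $\sigma_2$-integral into the scalar-curvature integral using Lemma 4.5. Expanding the definition gives $\int_{\mathcal{M}} \sigma_2(A_g)\,dV_g = \frac{1}{6}\int_{\mathcal{M}} R^2\,dV_g - \frac{1}{2}\int_{\mathcal{M}} |Rc|^2\,dV_g$, and Lemma 4.5 supplies exactly the relation $\int_{\mathcal{M}} |Rc|^2\,dV_g = \frac{1}{2}\int_{\mathcal{M}} R^2\,dV_g - Vol(\mathcal{M}, g)$. Substituting, I expect to arrive at the closed form

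$$\int_{\mathcal{M}} \sigma_2(A_g)\,dV_g = \frac{1}{2}Vol(\mathcal{M}, g) - \frac{1}{12}\int_{\mathcal{M}} R^2\,dV_g = \frac{1}{12}\left(6\,Vol(\mathcal{M}, g) - \int_{\mathcal{M}} R^2\,dV_g\right).$$

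From this single identity the equivalence is transparent: the right-hand side is strictly positive precisely when $\int_{\mathcal{M}} R^2\,dV_g < 6\,Vol(\mathcal{M}, g)$.

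Assembling the two observations yields both implications at once. If the soliton obeys the strict bound $\int_{\mathcal{M}} R^2\,dV_g < 6\,Vol(\mathcal{M}, g)$, then the displayed identity makes $\int_{\mathcal{M}} \sigma_2(A_g)\,dV_g > 0$, while $\mathcal{Y}(\mathcal{M}, [g]) > 0$ holds unconditionally, so $g \in \mathcal{A}$; conversely $g \in \mathcal{A}$ forces $\int_{\mathcal{M}} \sigma_2(A_g)\,dV_g > 0$, which by the same identity returns the strict bound. Because the conclusion is essentially forced once Lemma 4.5 is invoked, I do not anticipate a genuine analytic obstacle. The one point demanding care---and the only place the soliton hypothesis beyond Lemma 4.5 is used---is the observation that the positive-Yamabe clause in the definition of $\mathcal{A}$ is redundant here, so that the entire content of membership in $\mathcal{A}$ is carried by the $\sigma_2$-integral.
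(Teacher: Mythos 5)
Your proof is correct and follows essentially the same route as the paper: positivity of the Yamabe invariant is automatic from Theorem 2.2, and Lemma 4.5 converts the $\sigma_2$-integral into the scalar-curvature bound. The only cosmetic difference is that you package the computation as the single identity $\int_{\mathcal{M}} \sigma_2(A_g)\,dV_g = \tfrac{1}{12}\bigl(6\,Vol(\mathcal{M},g) - \int_{\mathcal{M}} R^2\,dV_g\bigr)$, whereas the paper chains the equivalences directly; both are the same calculation.
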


\begin{proof}

Note first that by theorem 2.2, all shrinking gradient Ricci solitons have positive scalar curvature, and thus $\mathcal{Y}(M, [g]) > 0$. Now compute:

$$\int_{\mathcal{M}} \sigma_2(A_g) dV_g > 0 \iff  
\int_{\mathcal{M}} \frac{1}{6}(R^2 - 3|Rc|^2) dV_g > 0 \iff  \int_{\mathcal{M}} \frac{1}{3}R^2 - |Rc|^2 dV_g > 0$$
And thus by lemma 4.5
$$\iff Vol(\mathcal{M}, g) -  \int_{\mathcal{M}} \frac{1}{6}R^2 dV_g > 0 \iff \int_{\mathcal{M}} R^2 dV_g < 6Vol(\mathcal{M}, g).$$

This completes the proof.

\end{proof}

Thus, metrics satisfying the condition of theorem 4.4 strictly are in $\mathcal{A}$, and vice versa. As for theorem 4.6, Cheng, Ribeiro Jr., and Zhou \cite{ernani} use the following lemma to derive the result of theorem 4.6.

\begin{lem}

Let $(\mathcal{M}^4, g, f)$ be a four-dimensional compact gradient shrinking Ricci soliton with $\rho$ normalized to $\frac{1}{2}$. Then the following holds:

$$8\pi^2 \chi(\mathcal{M}^4) \geq \int_{\mathcal{M}^4} |W|^2 dV_g + \frac{1}{24}Vol(\mathcal{M}^4, g)(5 - e^{f_{max} - f_{min}})$$

There is equality if and only if $(\mathcal{M}^4, g)$ is Einstein. 

\end{lem}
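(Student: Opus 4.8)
The plan is to turn the inequality into a single sharp integral estimate on the curvature and then to prove that estimate by integrating against the soliton-weighted measure $e^{-f}dV_g$.

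First I would start from the Chern–Gauss–Bonnet formula in the $\sigma_2$ form recorded above, $8\pi^2\chi(\mathcal{M}^4)=\int_{\mathcal{M}}|W|^2\,dV_g+\int_{\mathcal{M}}\sigma_2(A_g)\,dV_g$ with $\sigma_2(A_g)=\tfrac16(R^2-3|Rc|^2)$, and eliminate $|Rc|^2$ using Lemma 4.5. Since $\int_{\mathcal{M}}(\tfrac12R^2-|Rc|^2)\,dV_g=Vol(\mathcal{M},g)$, that lemma is equivalent to $\int_{\mathcal{M}}|\mathring{Rc}|^2\,dV_g=\tfrac14\int_{\mathcal{M}}R^2\,dV_g-Vol(\mathcal{M},g)$ (using $|\mathring{Rc}|^2=|Rc|^2-\tfrac14R^2$ in dimension four), and substituting gives
\[
\int_{\mathcal{M}}\sigma_2(A_g)\,dV_g=\tfrac12\,Vol(\mathcal{M},g)-\tfrac1{12}\int_{\mathcal{M}}R^2\,dV_g.
\]
A short rearrangement then shows that the claimed inequality is \emph{exactly equivalent} to the sharp estimate
\[
\int_{\mathcal{M}}|\mathring{Rc}|^2\,dV_g\;\le\;\frac{Vol(\mathcal{M},g)}{8}\big(e^{f_{max}-f_{min}}-1\big),
\]
with equality precisely when $\mathring{Rc}\equiv0$. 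This is convenient because it makes the rigidity transparent: the right-hand side vanishes exactly in the Einstein limit $f_{max}=f_{min}$.

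To prove this estimate I would pass to the weighted Laplacian $\Delta_f:=\Delta-\langle\nabla f,\nabla\,\cdot\,\rangle$, which is self-adjoint for $e^{-f}dV_g$, so that $\int_{\mathcal{M}}\Delta_f u\,e^{-f}dV_g=0$ for every $u$. Differentiating the soliton identity $\tfrac12\nabla R=Rc(\nabla f)$ established in the proof of Theorem 2.1 yields, with $\rho=\tfrac12$, the standard drift identity $\Delta_f R=R-2|Rc|^2$. Integrating it against $e^{-f}dV_g$ gives the key weighted identity $\int_{\mathcal{M}}R\,e^{-f}dV_g=2\int_{\mathcal{M}}|Rc|^2 e^{-f}dV_g$, and splitting off the trace via $|Rc|^2=|\mathring{Rc}|^2+\tfrac14R^2$ and the traced equation $R+\Delta f=2$ turns this into the exact formula
\[
\int_{\mathcal{M}}|\mathring{Rc}|^2 e^{-f}dV_g=\tfrac14\int_{\mathcal{M}}R(2-R)e^{-f}dV_g=\tfrac14\int_{\mathcal{M}}R\,\Delta f\,e^{-f}dV_g.
\]
This controls the \emph{weighted} total traceless Ricci curvature exactly, and its right-hand side is manifestly nonnegative, reflecting $|Rc|^2\ge\tfrac14R^2$ with equality only in the Einstein case.

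The decisive and hardest step is converting this weighted identity into the \emph{unweighted} bound above, producing precisely the factor $e^{f_{max}-f_{min}}$ and the sharp constant $\tfrac18$. The crude comparison $e^{-f_{max}}\le e^{-f}\le e^{-f_{min}}$ relates $\int|\mathring{Rc}|^2 dV_g$ to its weighted counterpart at the cost of a factor $e^{f_{max}-f_{min}}$, but each such comparison is sharp only at Einstein and applied naively overshoots the constant. To recover sharpness I would bring in the shrinking conservation law $R+|\nabla f|^2-f\equiv\mathrm{const}$ (the $\rho=\tfrac12$ analogue of the identity obtained in the proof of Theorem 2.1), which by the maximum principle pins $\max_{\mathcal{M}}R=\mathrm{const}+f_{max}$ and more generally ties the weight $e^{-f}$ to the scalar curvature pointwise. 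The idea is to organize the conversion — for instance via a coarea decomposition of $\int_{\mathcal{M}}R(2-R)e^{-f}dV_g$ over the level sets of $f$, or via an optimized one-parameter family of weights — so that every estimate is tested against $|\mathring{Rc}|^2$ and the accumulated loss is exactly $\tfrac18\,Vol(\mathcal{M},g)\big(e^{f_{max}-f_{min}}-1\big)$ rather than a larger multiple. I expect this sharp bookkeeping of the constant to be the main obstacle, precisely because the natural weighted-to-unweighted inequalities are individually lossy away from the Einstein locus.

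Finally, for the rigidity clause I would trace equality backwards through the chain: equality forces $|Rc|^2=\tfrac14R^2$ everywhere, i.e. $\mathring{Rc}\equiv0$, and then the soliton equation makes $\nabla^2 f$ pure trace, so on a compact manifold $f$ is constant and $g$ is Einstein. Conversely, if $(\mathcal{M}^4,g)$ is Einstein then $\mathring{Rc}\equiv0$, $f$ is constant, $e^{f_{max}-f_{min}}=1$, and both sides reduce to $8\pi^2\chi=\int_{\mathcal{M}}|W|^2\,dV_g+\tfrac16\,Vol(\mathcal{M},g)$, so equality indeed holds.
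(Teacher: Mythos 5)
A preliminary remark: the paper does not actually prove this lemma --- it is quoted from Cheng--Ribeiro Jr.--Zhou \cite{ernani} as an ingredient for Theorem 4.6 --- so your attempt has to be judged on its own merits. Your two reductions are correct and are genuinely the right way to begin: via Chern--Gauss--Bonnet and Lemma 4.5 the stated inequality is exactly equivalent to $\int_{\mathcal{M}}|\mathring{Rc}|^2\,dV_g \le \tfrac{1}{8}Vol(\mathcal{M},g)\,(e^{f_{max}-f_{min}}-1)$, and the weighted identity $\int_{\mathcal{M}}|\mathring{Rc}|^2 e^{-f}\,dV_g = \tfrac14\int_{\mathcal{M}}R(2-R)e^{-f}\,dV_g$ follows correctly from $\Delta_f R = R - 2|Rc|^2$. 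But the proof stops there. The step you yourself call ``decisive and hardest'' --- converting the weighted identity into the unweighted bound with the sharp constant $\tfrac18$ --- is never carried out: a coarea decomposition over level sets of $f$ and an ``optimized one-parameter family of weights'' are named but not executed, and you correctly observe that the naive comparison $e^{-f_{max}}\le e^{-f}\le e^{-f_{min}}$, applied term by term, overshoots the constant. What you have is a correct reformulation plus a research plan, not a proof; the rigidity clause inherits the same gap, since ``tracing equality backwards through the chain'' presupposes a chain of inequalities that was never established.

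The missing idea is more elementary than the machinery you propose: complete the square. Writing $R(2-R)=1-(R-1)^2$, your weighted identity becomes $\int|\mathring{Rc}|^2e^{-f} = \tfrac14\int e^{-f}-\tfrac14\int(R-1)^2e^{-f}$. Now apply $e^{-f}\ge e^{-f_{max}}$ to the two \emph{nonnegative} integrands $|\mathring{Rc}|^2$ and $(R-1)^2$, and $e^{-f}\le e^{-f_{min}}$ to the middle term; multiplying by $e^{f_{max}}$ gives
\begin{equation*}
\int_{\mathcal{M}}|\mathring{Rc}|^2\,dV_g \;\le\; \tfrac14 e^{f_{max}-f_{min}}Vol(\mathcal{M},g) \;-\; \tfrac14\int_{\mathcal{M}}(R-1)^2\,dV_g.
\end{equation*}
The term you feared losing is thus not discarded but absorbed: since $\int R\,dV_g = 2Vol(\mathcal{M},g)$ and $\int R^2\,dV_g = 4\int|\mathring{Rc}|^2\,dV_g + 4Vol(\mathcal{M},g)$ (Lemma 4.5 again), one has $\int(R-1)^2\,dV_g = 4\int|\mathring{Rc}|^2\,dV_g + Vol(\mathcal{M},g)$, and substituting this into the display yields $2\int|\mathring{Rc}|^2\,dV_g \le \tfrac14 Vol(\mathcal{M},g)(e^{f_{max}-f_{min}}-1)$, which is precisely your target estimate. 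Rigidity is now automatic: equality forces equality in $\int e^{-f}\le e^{-f_{min}}Vol(\mathcal{M},g)$, hence $f$ is constant and $g$ is Einstein, and the Einstein case gives equality by direct check. One last quibble on your converse rigidity argument: a pure-trace Hessian on a compact manifold does not by itself force $f$ to be constant (the round sphere carries non-constant $f$ with $\nabla^2 f = \tfrac{\Delta f}{4}g$); from $\mathring{Rc}\equiv 0$ you must first invoke the contracted Bianchi identity to make $R$ --- and hence the trace factor $\tfrac12-\tfrac{R}{4}$ --- constant, after which integrating the trace kills it.
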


Using this lemma, we may find that metrics satisfying the hypotheses of theorem 4.6 strictly also lie in $\mathcal{A}$.

\begin{thm}

Metrics satisfying the hypotheses of theorem 4.6 strictly, i,e. $f_{max} - f_{min} < log(5)$, are in $\mathcal{A}$.

\end{thm}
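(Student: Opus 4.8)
The plan is to verify the two defining conditions of the class $\mathcal{A}$ separately: that $\mathcal{Y}(M, [g]) > 0$ and that $\int_{\mathcal{M}} \sigma_2(A_g) \, dV_g > 0$. The first is immediate from the soliton structure, while the second is where Lemma 4.7 does the real work.

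The positivity of the Yamabe invariant follows with no further effort. By Theorem 2.2, every compact gradient shrinking Ricci soliton has $R > 0$ everywhere, and a metric of positive scalar curvature automatically lies in a conformal class of positive Yamabe invariant (the first eigenvalue of the conformal Laplacian $-6\Delta + R$ is positive when $R > 0$). Hence $\mathcal{Y}(M, [g]) > 0$, exactly as already observed in the proof of Proposition 4.8.

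For the second condition I would combine Lemma 4.7 with the Chern-Gauss-Bonnet formula $8\pi^2 \chi(\mathcal{M}) = \int_{\mathcal{M}} |W|^2 \, dV_g + \int_{\mathcal{M}} \sigma_2 \, dV_g$. Substituting this expression for $8\pi^2 \chi$ into the left-hand side of the inequality in Lemma 4.7 and cancelling the common Weyl term $\int_{\mathcal{M}} |W|^2 \, dV_g$ from both sides leaves the clean bound
$$\int_{\mathcal{M}} \sigma_2(A_g) \, dV_g \geq \frac{1}{24} Vol(\mathcal{M}, g)\left(5 - e^{f_{max} - f_{min}}\right).$$
I would then invoke the strict hypothesis: since $f_{max} - f_{min} < \log(5)$, exponentiating gives $e^{f_{max} - f_{min}} < 5$, so the factor $5 - e^{f_{max} - f_{min}}$ is strictly positive. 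As the volume is positive, the right-hand side is strictly positive, forcing $\int_{\mathcal{M}} \sigma_2(A_g) \, dV_g > 0$. Both defining conditions of $\mathcal{A}$ then hold, so $g \in \mathcal{A}$.

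There is no serious obstacle once Lemma 4.7 is granted; the whole argument is a direct consequence of that lemma and Chern-Gauss-Bonnet. The one point deserving care is that Lemma 4.7 supplies only a non-strict inequality $\geq$, so the strictness in the conclusion must come entirely from the strictness of the hypothesis on the oscillation of $f$, which yields a strictly positive lower bound rather than merely a non-negative one. The genuinely substantive content lives inside Lemma 4.7 itself (whose proof I assume), where the soliton structure is used to bound $\int_{\mathcal{M}} \sigma_2$ below in terms of $e^{f_{max} - f_{min}}$.
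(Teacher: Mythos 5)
Your proposal is correct and follows essentially the same argument as the paper: both combine the oscillation lemma (the bound $8\pi^2\chi \geq \int_{\mathcal{M}}|W|^2\,dV_g + \frac{1}{24}\mathrm{Vol}(\mathcal{M},g)\bigl(5 - e^{f_{max}-f_{min}}\bigr)$, which the paper cites with inconsistent numbering as Lemma 4.9/4.11) with the Chern--Gauss--Bonnet formula, cancel the Weyl term to get $\int_{\mathcal{M}}\sigma_2\,dV_g \geq \frac{1}{24}\mathrm{Vol}(\mathcal{M},g)\bigl(5 - e^{f_{max}-f_{min}}\bigr) > 0$, and invoke Theorem 2.2 for positivity of the Yamabe invariant. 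Your explicit remark that the strictness of the conclusion comes from the strict hypothesis rather than from the lemma is a point the paper leaves implicit, but the route is the same.
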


\begin{proof}

Recalling the Chern-Gauss-Bonnet formula from above as well as lemma 4.11, we have that

$$\int_{\mathcal{M}^4} |W|^2 dV_g + \int_{\mathcal{M}^4} \sigma_2 dV_g = 8\pi^2 \chi(\mathcal{M}) \geq \int_{\mathcal{M}^4} |W|^2 dV_g + \frac{1}{24}Vol(\mathcal{M}^4, g)(5 - e^{f_{max} - f_{min}})$$

and therefore:

$$\int_{\mathcal{M}^4} \sigma_2 dV_g \geq \frac{1}{24}Vol(\mathcal{M}^4, g)(5 - e^{f_{max} - f_{min}}) > 0$$

by assumption. By theorem 2.2, $\mathcal{Y}(M, [g]) > 0$, and thus $g \in \mathcal{A}$ by definition. 

\end{proof}

The following corollary results

\begin{cor}
$\emph{All}$ metrics satisfying the hypotheses of theorem 4.4 strictly or theorem 4.6 strictly are in $\mathcal{A}$. 
\end{cor}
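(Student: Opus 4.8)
The plan is to observe that this corollary is an immediate consequence of the two results established just above it, namely Proposition 4.9 and Theorem 4.12. Each of those statements handles exactly one of the two sufficient conditions appearing in the hypothesis of the corollary, so there is essentially nothing new to prove beyond recording their union. I would therefore structure the argument as a short synthesis rather than a fresh computation.

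First I would recall that Proposition 4.9 establishes that a normalized gradient shrinking Ricci soliton satisfies $\int_{\mathcal{M}} R^2\, dV_g < 6\,Vol(\mathcal{M}, g)$ --- which is precisely the strict form of the hypothesis of Theorem 4.4 --- if and only if $g \in \mathcal{A}$. In particular, the forward implication already gives that every metric satisfying the condition of Theorem 4.4 strictly lies in $\mathcal{A}$. Next I would invoke Theorem 4.12, which directly asserts that every metric with $f_{max} - f_{min} < log(5)$ --- the strict form of the hypothesis of Theorem 4.6 --- lies in $\mathcal{A}$.

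Finally, since the collection of metrics described in the corollary is simply the union of the two classes covered by Proposition 4.9 and Theorem 4.12, and since each of these two classes has just been shown to be contained in $\mathcal{A}$, their union is contained in $\mathcal{A}$ as well. This yields the claim.

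I do not anticipate any genuine obstacle here: the entire mathematical content of the corollary has already been absorbed into the proofs of the two preceding statements, which is exactly why it is phrased as a corollary rather than a theorem. The only point requiring a moment's care is bookkeeping --- matching each strict-inequality hypothesis to the correct one of Proposition 4.9 and Theorem 4.12, and noting that the quantifier ``all'' refers to the union of the two families --- and this is entirely routine.
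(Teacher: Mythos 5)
Your proposal is correct and matches the paper's (implicit) argument exactly: the corollary is recorded without proof precisely because it is the union of the forward direction of the proposition characterizing $\int_{\mathcal{M}} R^2\, dV_g < 6\,Vol(\mathcal{M},g)$ as membership in $\mathcal{A}$, together with the theorem placing metrics with $f_{max}-f_{min} < \log(5)$ in $\mathcal{A}$. Your only slips are in the statement numbers (in the paper these are Proposition 4.8 and Theorem 4.10, not 4.9 and 4.12), but you identified the correct statements by content, so the argument is sound.
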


In fact, we can say more, since strict satisfaction of the hypothesis in theorem 4.4 coincides precisely with being in $\mathcal{A}$. 
\begin{thm}
Let $(\mathcal{M}^4, g, f)$ be a compact gradient shrinking Ricci soliton satisfying $f_{max} - f_{min} < log(5)$, then 

$$\int_{\mathcal{M}^4}R^2 dV_g < 6 Vol(\mathcal{M}^4, g)$$

\end{thm}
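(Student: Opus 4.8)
The plan is to chain together the two results the paper has already established: Theorem 4.12 (metrics with $f_{max}-f_{min}<\log(5)$ lie in $\mathcal{A}$) and Proposition 4.9 (membership in $\mathcal{A}$ is equivalent to the strict integral inequality $\int_{\mathcal{M}} R^2\,dV_g < 6\,Vol(\mathcal{M},g)$). Since both have been proved earlier in the excerpt, this final theorem is essentially a one-line logical composition, and the honest thing to do is to present it as such rather than re-derive anything.

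Concretely, I would argue as follows. Suppose $(\mathcal{M}^4,g,f)$ is a compact gradient shrinking Ricci soliton with $f_{max}-f_{min}<\log(5)$. By Theorem 4.12, the strict satisfaction of the hypothesis of Theorem 4.6 forces $g\in\mathcal{A}$; that is, $\mathcal{Y}(M,[g])>0$ and $\int_{\mathcal{M}}\sigma_2(A_g)\,dV_g>0$. Now invoke Proposition 4.9, which asserts that a normalized gradient shrinking Ricci soliton satisfies $\int_{\mathcal{M}} R^2\,dV_g < 6\,Vol(\mathcal{M},g)$ if and only if $g\in\mathcal{A}$. Applying the backward direction of this biconditional to our $g\in\mathcal{A}$ yields exactly
$$\int_{\mathcal{M}^4} R^2\,dV_g < 6\,Vol(\mathcal{M}^4,g),$$
which is the claim. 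The scale normalization $\rho=\tfrac12$ assumed in the Remark preceding Theorem 4.4 is harmless, since Proposition 4.9 is stated for normalized solitons and the soliton equation is scale invariant.

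The only subtlety worth flagging is a bookkeeping one rather than a genuine obstacle: I should make sure that the ``strict'' version of Theorem 4.6 is precisely the hypothesis $f_{max}-f_{min}<\log(5)$, which it is by the statement of Theorem 4.12, and that the strictness propagates correctly through to the strict inequality in Proposition 4.9. Since Proposition 4.9 is an \emph{iff} with strict inequalities on both sides, there is no loss of strictness anywhere in the chain. The main ``hard part'' of this result has already been absorbed into Lemma 4.11 (the curvature estimate bounding $\chi$ below in terms of $e^{f_{max}-f_{min}}$) and Lemma 4.5 (the integral identity relating $\int R^2$, $\int|Rc|^2$, and the volume), both of which are invoked upstream in Theorems 4.9 and 4.12; given those, nothing further needs to be computed here. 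I would therefore write the proof as a short deduction citing Theorem 4.12 followed by Proposition 4.9, emphasizing that this theorem makes explicit the containment already implicit in Corollary 4.13 — namely that the hypothesis of Theorem 4.6 is in fact \emph{stronger} than (a fortiori implies) the hypothesis of Theorem 4.4.
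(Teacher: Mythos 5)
Your proposal is correct and is precisely the paper's own argument: the paper proves this theorem in one line by citing its Theorem 4.10 (the strict oscillation bound $f_{max}-f_{min}<\log(5)$ forces $g\in\mathcal{A}$) and then the backward direction of Proposition 4.8 (membership in $\mathcal{A}$ is equivalent to $\int_{\mathcal{M}^4}R^2\,dV_g < 6\,Vol(\mathcal{M}^4,g)$). The only discrepancy is that your reference numbers are shifted relative to the paper's numbering, but the statements you invoke are exactly the ones the paper uses.
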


\begin{proof}
By theorem 4.10, $g \in \mathcal{A}$, and thus by proposition 4.8 $g$ satisfies the stated condition. 

\end{proof}
Thus, the strict satisfaction of the conditions in theorem 4.6 entails the strict satisfaction of the condition in theorem 4.4, and thus the assumption of theorem 4.4 is weaker than that of theorem 4.6 in the strict case.

 How restrictive is the condition that $g \in \mathcal{A}$? Among other properties of metrics in $\mathcal{A}$, such as our ability to prescribe $\sigma_2$ as any positive smooth function within a conformal class, we have the following result \cite{chang}

\begin{thm}
Metrics in $\mathcal{A}$ have positive Ricci curvature. 
\end{thm}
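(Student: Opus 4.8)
This is the theorem of Chang, Gursky, and Yang, and I would first record the one interpretive point hidden in the statement. Both defining conditions of $\mathcal{A}$ --- positivity of the Yamabe invariant $\mathcal{Y}(M,[g])$ and positivity of $\int_{\mathcal{M}}\sigma_2(A_g)\,dV_g$ --- are conformally invariant, so $\mathcal{A}$ is a union of entire conformal classes; accordingly the conclusion to aim at is that the conformal class $[g]$ of any $g\in\mathcal{A}$ contains a representative of positive Ricci curvature. The plan is to reduce ``positive Ricci'' to a pointwise cone condition on the Schouten tensor and then to produce a conformal representative satisfying that condition. In dimension four one has the affine relation $Rc = A_g + \tfrac12\sigma_1(A_g)\,g$, so if $\lambda_1\le\cdots\le\lambda_4$ denote the eigenvalues of $A_g$, then $Rc>0$ is exactly the requirement $2\lambda_i + \sigma_1(A_g) > 0$ for every $i$; equivalently it suffices that $A_g$ lie in the positive $2$-cone $\Gamma_2^+=\{\sigma_1>0,\ \sigma_2>0\}$, as I verify in the last step.

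\emph{Step 1, the analytic heart.} I would invoke the Chang--Gursky--Yang solution of the fully nonlinear $\sigma_2$-Yamabe problem: under the standing hypotheses $\mathcal{Y}(M,[g])>0$ and $\int_{\mathcal{M}}\sigma_2(A_g)\,dV_g>0$ there is a conformal metric $g_w=e^{2w}g\in[g]$ with $A_{g_w}\in\Gamma_2^+$ at every point (indeed with $\sigma_2(A_{g_w})$ equal to a positive constant). Under the conformal change the Schouten tensor transforms by the Hessian-type rule $A_{g_w}=A_g-\nabla^2 w+dw\otimes dw-\tfrac12|\nabla w|^2 g$, so that $\sigma_2(A_{g_w})=\mathrm{const}$ is a second-order fully nonlinear equation of Monge--Amp\`ere/Hessian type in $w$. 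I expect this to be the main obstacle: the equation is only degenerate-elliptic, becoming elliptic precisely on the admissible cone $\Gamma_2^+$, so the work is to run a continuity or deformation method while keeping the solution admissible and to establish the a priori $C^2$ (and hence higher) estimates that close the argument. The two sign hypotheses are exactly what is needed --- $\mathcal{Y}>0$ keeps us in the component $\Gamma_1^+=\{\sigma_1>0\}$, and $\int_{\mathcal{M}}\sigma_2\,dV_g>0$ is the quantitative input guaranteeing that the deformation terminates at a metric with $\sigma_2>0$ pointwise rather than degenerating.

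\emph{Step 2, the pointwise algebra.} With $g_w$ in hand it remains to prove the elementary implication $A_{g_w}\in\Gamma_2^+\Rightarrow Rc_{g_w}>0$, which I would do directly. Using the identity $\sigma_2=\tfrac12(\sigma_1^2-|\lambda|^2)$, the hypothesis $\sigma_2>0$ reads $|\lambda|^2<\sigma_1^2$. Suppose the smallest eigenvalue violated the cone bound, say $\lambda_1\le-\tfrac12\sigma_1$; then $\lambda_1^2\ge\tfrac14\sigma_1^2$, while the remaining three eigenvalues sum to $\sigma_1-\lambda_1\ge\tfrac32\sigma_1$ and hence, by Cauchy--Schwarz, satisfy $\lambda_2^2+\lambda_3^2+\lambda_4^2\ge\tfrac13(\tfrac32\sigma_1)^2=\tfrac34\sigma_1^2$. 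Adding the two bounds gives $|\lambda|^2\ge\sigma_1^2$, contradicting $\sigma_2>0$. Therefore $2\lambda_1+\sigma_1>0$, and since $\lambda_1$ is the minimum the same holds for every eigenvalue, i.e. $Rc_{g_w}>0$. Combining the two steps exhibits a positive-Ricci metric in $[g]$, which is the assertion; I would close by noting that the scalar-curvature positivity built into $\Gamma_2^+$ (namely $\sigma_1>0$) is automatically consistent with Theorem 2.2 and with $\mathcal{Y}(M,[g])>0$.
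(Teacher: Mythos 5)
The paper itself contains no proof of this theorem: it is quoted directly from Chang's lectures \cite{chang}, and the underlying result is the theorem of Chang--Gursky--Yang. So the relevant comparison is with that cited argument, and your proposal reconstructs it faithfully and correctly, with the right division of labor. The analytic content is exactly the $\sigma_2$-Yamabe existence theorem (a conformal representative whose Schouten tensor lies in $\Gamma_2^+$, produced by a fully nonlinear continuity/deformation argument under the two sign hypotheses defining $\mathcal{A}$); you invoke this rather than prove it, which is the same logical status it has in the paper, and your description of where the difficulty sits (degenerate ellipticity off the cone, a priori $C^2$ estimates) is accurate. The algebraic step $\Gamma_2^+ \Rightarrow Rc > 0$ you then prove completely, and the Cauchy--Schwarz eigenvalue argument is correct: the cone condition $2\lambda_i + \sigma_1 > 0$ and the identity $Rc = A_g + \tfrac{1}{2}\sigma_1(A_g)\,g$ hold verbatim under the paper's convention $A_g = Rc - \tfrac{R}{6}g$, $\sigma_2(A_g) = \tfrac{1}{6}(R^2 - 3|Rc|^2)$, and the cone condition is invariant under rescaling $A_g$, so no normalization mismatch arises.

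Your interpretive remark at the outset deserves emphasis, because it is forced rather than optional: $\mathcal{A}$ is a union of entire conformal classes, while positivity of Ricci is not conformally invariant, so the theorem can only be true in the form you prove --- every $g \in \mathcal{A}$ is \emph{conformal to} a metric of positive Ricci curvature, which is precisely what Chang--Gursky--Yang establish. Note, however, that this correct reading is strictly weaker than the paper's literal phrasing, and the difference matters downstream: Theorem 4.14 and the subsequent claim that Cao's solitons (whose Ricci curvature is not everywhere positive) are ``missed'' by the strict sufficient conditions use the pointwise statement for the soliton metric itself, which does not follow from the conformal-class statement alone. Closing that gap would require either a soliton-specific argument that the soliton metric inherits the positivity, or a restatement of Theorem 4.14 and the ensuing discussion in conformal terms.
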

Thus we have the following theorem restricting the geometry of metrics satisfying these two sufficient conditions:
\begin{thm}
Metrics satisfying the hypotheses of theorem 4.4 strictly or theorem 4.6 strictly have positive Ricci curvature.
\end{thm}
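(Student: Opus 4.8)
The plan is to obtain this statement as a direct consequence of the two results immediately preceding it, Corollary 4.12 and Theorem 4.13, which chain together to give exactly the implication asserted. No new analytic machinery should be needed; the statement is a formal composition of inclusions already in hand.

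First I would invoke Corollary 4.12, which asserts that every metric satisfying the hypotheses of Theorem 4.4 strictly (that is, $\int_{\mathcal{M}^4} R^2\, dV_g < 6\,Vol(\mathcal{M}^4, g)$) or the hypotheses of Theorem 4.6 strictly (that is, $f_{max} - f_{min} < log(5)$) lies in the class $\mathcal{A}$. This reduces the claim to the single assertion that metrics in $\mathcal{A}$ have positive Ricci curvature, collapsing the two separate sufficient conditions into one membership condition.

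Second, I would apply Theorem 4.13, which states precisely this Ricci-positivity for metrics in $\mathcal{A}$. Composing the two inclusions then yields the conclusion directly: a metric satisfying either set of strict hypotheses belongs to $\mathcal{A}$ by Corollary 4.12, and therefore has positive Ricci curvature by Theorem 4.13. The proof is thus a one-line deduction once the earlier results are granted.

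Since both ingredients are already established, I do not anticipate any genuine analytic obstacle; all of the substantive work sits upstream, in the conformal-geometric input of Theorem 4.13 and in the soliton identity of Lemma 4.5 used (via Proposition 4.8 and Theorem 4.10) to place these metrics in $\mathcal{A}$. The only point requiring attention is organizational: verifying that Corollary 4.12 indeed handles both sufficient conditions simultaneously, so that the uniform conclusion of positive Ricci curvature applies identically whether the metric arises from the integral bound of Theorem 4.4 or from the potential-oscillation bound of Theorem 4.6.
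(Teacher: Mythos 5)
Your proposal is correct and is exactly the paper's own proof, which reads in full: ``Use theorem 4.13 and corollary 4.11.'' The only slip is labeling: the corollary placing both strict classes inside $\mathcal{A}$ is Corollary 4.11 in the paper's numbering (Theorem 4.12 is the separate statement that $f_{max} - f_{min} < \log(5)$ implies the strict integral bound), but the content you invoke is precisely right.
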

\begin{proof}

Use theorem 4.13 and corollary 4.11.

\end{proof}

 Given that Cao \cite{caoTwo} has constructed a countable class of gradient shrinking Ricci solitons with Ricci curvature not positive everywhere on twisted projective line bundles over $\mathbb{CP}^1$, the strict sufficient conditions of theorems 4.4 and 4.6 already "miss" many examples of compact gradient shrinking Ricci solitons and are thus proved not to be universally applicable.

The discussion above only applies to the case of strict inequality in both sufficient conditions. We now proceed to examine to what metrics the rigid case of equality applies in either theorem. We first examine the rigid case of theorem 4.4.

\begin{thm}
A compact gradient shrinking Ricci soliton $(\mathcal{M}^4, g, f)$ satisfies

$$\int_{\mathcal{M}^4} R^2 dV_g = 6 Vol(\mathcal{M}^4, g)$$

if and only if

$$\int_{\mathcal{M}^4} \sigma_2 dV_g = 0$$

\end{thm}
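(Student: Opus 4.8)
The plan is to mirror the proof of proposition 4.8 exactly, simply replacing the strict inequality at every step with equality. Since each step in that earlier argument is a genuine logical equivalence (or an algebraic identity rather than an inequality), the entire chain transports verbatim to the rigid case, and there is nothing essentially new to establish.

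First I would start from the definition $\sigma_2(A_g) = \frac{1}{6}(R^2 - 3|Rc|^2)$ and note that scaling by a positive constant does not affect the vanishing of an integral, so
$$\int_{\mathcal{M}^4} \sigma_2 dV_g = 0 \iff \int_{\mathcal{M}^4} \tfrac{1}{3}R^2 - |Rc|^2 \, dV_g = 0.$$
The crucial step is then to rewrite the integrand by splitting off the combination that lemma 4.5 controls, namely $\tfrac{1}{3}R^2 - |Rc|^2 = \bigl(\tfrac{1}{2}R^2 - |Rc|^2\bigr) - \tfrac{1}{6}R^2$. Applying lemma 4.5 to the first grouping gives the identity
$$\int_{\mathcal{M}^4} \tfrac{1}{3}R^2 - |Rc|^2 \, dV_g = Vol(\mathcal{M}^4, g) - \tfrac{1}{6}\int_{\mathcal{M}^4} R^2 \, dV_g.$$
Setting the left-hand side equal to zero and rearranging yields precisely $\int_{\mathcal{M}^4} R^2 \, dV_g = 6\, Vol(\mathcal{M}^4, g)$, and since every arrow above is reversible this establishes the biconditional in both directions simultaneously.

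The main obstacle: frankly, there is none of substance. This statement is a purely algebraic corollary of lemma 4.5, and the only thing one must verify is that the equivalences appearing in proposition 4.8 survive when the strict inequality is weakened to equality. They do so trivially, because they were equivalences of the form ``integral $>0$ iff integral $>0$'' built on top of an exact identity; replacing each ``$>$'' by ``$=$'' leaves the identity untouched. One could even phrase the result as an immediate consequence of proposition 4.8 by contraposition, noting that the strict inequality fails exactly when equality holds on one side and hence on the other.
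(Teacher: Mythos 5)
Your proof is correct and takes exactly the approach the paper intends: the paper's own proof of this theorem merely states that it is the proof of Proposition 4.8 with the strict inequality replaced by equality and leaves the verification to the reader, which is precisely the verification you have written out via the identity of Lemma 4.5. One small caution: your closing aside about deducing the result from Proposition 4.8 ``by contraposition'' is not quite right as pure logic (the negation of $<$ is $\geq$, not $=$; the equality case genuinely needs the underlying identity, not just the biconditional of Proposition 4.8), but your main argument does not rely on that remark.
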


\begin{proof}
This proof is essentially the proof of proposition 4.8 in the case that the strict inequality is changed to an equality. As such, it is left to the reader to verify. 
\end{proof}

The rigid case of theorem 4.6 may be obtained from lemma 4.9.

\begin{thm}
A compact gradient shrinking Ricci soliton $(\mathcal{M}^4, g, f)$ satisfying $f_{max} - f_{min} = log(5)$ must satisfy

$$\int_{\mathcal{M}^4} \sigma_2 dV_g \geq 0$$

\end{thm}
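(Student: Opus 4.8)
The plan is to run essentially the same computation as in the proof of Theorem 4.10, but now extracting the boundary value of the hypothesis rather than a strict inequality. The only substantive input is Lemma 4.9, which we are free to assume, together with the Chern-Gauss-Bonnet formula in the form $8\pi^2\chi(\mathcal{M}^4) = \int_{\mathcal{M}^4}|W|^2\,dV_g + \int_{\mathcal{M}^4}\sigma_2\,dV_g$.

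First I would equate the Chern-Gauss-Bonnet expression for $8\pi^2\chi(\mathcal{M}^4)$ with the lower bound supplied by Lemma 4.9, obtaining
$$\int_{\mathcal{M}^4}|W|^2\,dV_g + \int_{\mathcal{M}^4}\sigma_2\,dV_g = 8\pi^2\chi(\mathcal{M}^4) \geq \int_{\mathcal{M}^4}|W|^2\,dV_g + \frac{1}{24}Vol(\mathcal{M}^4, g)\bigl(5 - e^{f_{max} - f_{min}}\bigr).$$
Cancelling the common Weyl integral $\int_{\mathcal{M}^4}|W|^2\,dV_g$ from both sides then isolates the quantity of interest:
$$\int_{\mathcal{M}^4}\sigma_2\,dV_g \geq \frac{1}{24}Vol(\mathcal{M}^4, g)\bigl(5 - e^{f_{max} - f_{min}}\bigr).$$

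I would then substitute the hypothesis $f_{max} - f_{min} = log(5)$, so that $e^{f_{max} - f_{min}} = 5$ and the right-hand side collapses to $\frac{1}{24}Vol(\mathcal{M}^4, g)(5 - 5) = 0$. This gives the desired conclusion $\int_{\mathcal{M}^4}\sigma_2\,dV_g \geq 0$.

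At this level there is no genuine obstacle, since all of the analytic difficulty is already packaged inside Lemma 4.9; the proof is a direct substitution, exactly mirroring the strict case of Theorem 4.10 at its threshold, and conceptually parallel to the rigid case of Theorem 4.4 recorded in Theorem 4.15. One might additionally observe that the hypothesis is in fact incompatible with equality in Lemma 4.9: equality there forces $(\mathcal{M}^4, g)$ to be Einstein, whence tracing the soliton equation on the compact manifold would make $f$ constant, contradicting $f_{max} - f_{min} = log(5) > 0$. Consequently the inequality of Lemma 4.9 is strict under our hypothesis, and the conclusion could even be sharpened to $\int_{\mathcal{M}^4}\sigma_2\,dV_g > 0$; the stated non-strict form, however, requires nothing beyond the direct substitution above.
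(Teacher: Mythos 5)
Your proof is correct and is essentially identical to the paper's own argument: both combine the Chern--Gauss--Bonnet formula with the lower bound of Lemma 4.9, cancel the Weyl integral, and substitute $f_{max} - f_{min} = \log(5)$ to make the right-hand side vanish. Your additional remark that equality in Lemma 4.9 would force the metric to be Einstein (hence $f$ constant, contradicting the hypothesis), so the conclusion can be sharpened to a strict inequality, is a valid observation beyond what the paper records.
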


\begin{proof}
By lemma 4.9 one has the following:

$$\int_{\mathcal{M}^4} |W|^2 dV_g + \int_{\mathcal{M}^4} \sigma_2 dV_g = 8\pi^2 \chi(\mathcal{M}) \geq \int_{\mathcal{M}^4} |W|^2 dV_g + \frac{1}{24}Vol(\mathcal{M}^4, g)(5 - e^{f_{max} - f_{min}})$$

And, using the assumed condition one has:

$$\int_{\mathcal{M}^4} |W|^2 dV_g + \int_{\mathcal{M}^4} \sigma_2 dV_g = 8\pi^2 \chi(\mathcal{M}) \geq \int_{\mathcal{M}^4} |W|^2 dV_g$$

Subtracting concludes the proof:

$$\int_{\mathcal{M}^4} \sigma_2 dV_g \geq 0$$

\end{proof}

Thus, in the case of equality for both theorems, the sufficient condition of theorem 4.6 appears to be more widely applicable than the sufficient condition of theorem 4.4, as it may include metrics in $\mathcal{A}$. However we may see in fact that theorem 4.6 actually entails theorem 4.4, demonstrating that the assumption of theorem 4.6 is a stronger assumption than that of theorem 4.4.

\begin{thm}
A compact gradient shrinking Ricci soliton $(\mathcal{M}^4, g, f)$ satisfying $f_{max} - f_{min} \leq log(5)$ must satisfy

$$\int_{\mathcal{M}^4} R^2 dV_g \leq 6 Vol(\mathcal{M}^4, g).$$

\end{thm}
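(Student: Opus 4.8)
The plan is to dispose of the non-strict inequality by separating the hypothesis $f_{max}-f_{min}\le \log(5)$ into the strict and rigid subcases, each of which has essentially been treated above. If $f_{max}-f_{min} < \log(5)$, then Theorem 4.12 applies verbatim and gives $\int_{\mathcal{M}^4} R^2\, dV_g < 6\,Vol(\mathcal{M}^4,g)$; in particular the desired non-strict bound holds. So the only genuinely new case is the rigid one, $f_{max}-f_{min}=\log(5)$.

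For the rigid case I would invoke Theorem 4.16, which already extracts $\int_{\mathcal{M}^4}\sigma_2\, dV_g \ge 0$ from the equality $f_{max}-f_{min}=\log(5)$ by way of Lemma 4.9. What remains is to convert this sign condition into the stated bound on $\int R^2$. This conversion is precisely the dictionary built in the proof of Proposition 4.8: expanding $\sigma_2(A_g)=\frac{1}{6}(R^2-3|Rc|^2)$ and eliminating $\int |Rc|^2$ via Lemma 4.5 yields the affine identity $\int_{\mathcal{M}^4}\sigma_2\, dV_g = \frac{1}{2}\left(Vol(\mathcal{M}^4,g)-\frac{1}{6}\int_{\mathcal{M}^4}R^2\,dV_g\right)$. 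Since this relation is order-reversing in $\int R^2$, the inequality $\int_{\mathcal{M}^4}\sigma_2\, dV_g \ge 0$ is equivalent to $\int_{\mathcal{M}^4} R^2\, dV_g \le 6\,Vol(\mathcal{M}^4,g)$, which settles the rigid case and, together with the strict case, proves the theorem.

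I expect no serious obstacle: both ingredients (Theorem 4.12 and Theorem 4.16) are already in hand, and the only real content is the elementary constant-bookkeeping that turns the sign of $\int \sigma_2$ into the inequality $\int R^2 \le 6\,Vol$ through Lemma 4.5 — the same computation that drives Proposition 4.8 and Theorem 4.15. If one wishes to avoid the case split entirely, an alternative is to run the rigid-case argument uniformly: since $f_{max}-f_{min}\le \log(5)$ forces $5-e^{f_{max}-f_{min}}\ge 0$, subtracting $\int_{\mathcal{M}}|W|^2\,dV_g$ in the Chern-Gauss-Bonnet identity $8\pi^2\chi(\mathcal{M})=\int_{\mathcal{M}}|W|^2\,dV_g+\int_{\mathcal{M}^4}\sigma_2\,dV_g$ and comparing with Lemma 4.9 gives $\int_{\mathcal{M}^4}\sigma_2\,dV_g \ge \frac{1}{24}Vol(\mathcal{M}^4,g)\left(5-e^{f_{max}-f_{min}}\right)\ge 0$ in one stroke, after which Lemma 4.5 again produces $\int R^2 \le 6\,Vol$. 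Either route reaches the same conclusion; the one point to handle carefully is keeping the numerical factors straight when passing between $\sigma_2$ and $R^2$.
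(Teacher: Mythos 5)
Your proposal is correct and follows essentially the same route as the paper: split into the strict case (handled by Theorem 4.12) and the rigid case $f_{max}-f_{min}=\log(5)$ (handled by Theorem 4.16 together with the Lemma 4.5 dictionary between $\int_{\mathcal{M}^4}\sigma_2\,dV_g$ and $\int_{\mathcal{M}^4}R^2\,dV_g$). Your rigid case is in fact slightly cleaner than the paper's, since your affine identity $\int_{\mathcal{M}^4}\sigma_2\,dV_g=\tfrac{1}{2}\bigl(Vol(\mathcal{M}^4,g)-\tfrac{1}{6}\int_{\mathcal{M}^4}R^2\,dV_g\bigr)$ converts $\int_{\mathcal{M}^4}\sigma_2\,dV_g\ge 0$ into the conclusion in one step, whereas the paper sub-splits into $g\in\mathcal{A}$ (invoking Proposition 4.8) versus $\int_{\mathcal{M}^4}\sigma_2\,dV_g=0$ (invoking Theorem 4.15).
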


\begin{proof}
If $(\mathcal{M}^4, g, f)$ satisfies $f_{max} - f_{min} < log(5)$, then $g \in \mathcal{A}$ and thus by theorem 4.12 satisfies

$$\int_{\mathcal{M}^4} R^2 dV_g < 6 Vol(\mathcal{M}^4, g).$$

If $(\mathcal{M}^4, g, f)$ satisfies $f_{max} - f_{min} = log(5)$, then by theorem 4.16 $g \in \mathcal{A}$ again or 

$$\int_{\mathcal{M}^4} \sigma_2 dV_g = 0$$

in which case 

$$\int_{\mathcal{M}^4} R^2 dV_g = 6 Vol(\mathcal{M}^4, g)$$

by theorem 4.15.
\end{proof}

Thus the hypothesis of theorem 4.4 is a necessary condition for metrics satisfying the hypothesis of theorem 4.6. As far as the efficacy of theorem 4.4 goes, it applies only to metrics in $\mathcal{A}$ - in particular metrics of positive Ricci curvature - or to metrics for which the integral conformal invariant is precisely $0$, which is equivalent to the case of equality in theorem 4.4. Both of these conditions appear quite restrictive. 

To that end, a brief meta-mathematical detour is in order. It is easy to show using the local-to-global expression of section 3 and lemma 4.5 that the following inequality is equivalent to the Hitchin-Thorpe inequality on compact gradient shrinking Ricci solitons:

$$\int_{\mathcal{M}^4} 24 |W^\pm|^2 - R^2 + 6 dV_g \geq 0.$$

    From this vantage, it seems that control of $|W^\pm|^2$ in terms of $R^2$ is necessary to understand the behaviour of the Hitchin-Thorpe inequality on gradient shrinking Ricci solitons. Current work on this question - as in the case of the sufficient conditions above - treats the $|W^\pm|^2$ term as one that can be thrown away. Perhaps an understanding of this term is crucial to understanding the general situation. 

\section{Topological Obstructions to the Existence of Compact Kähler Gradient Shrinking Ricci Solitons}

The question of finding topological obstructions to the existence of gradient shrinking Ricci solitons is difficult in general. In this section we now proceed to investigate the question of topological obstructions to the existence of compact Kähler gradient shrinking Ricci solitons and find that this additional structure allows for a very nice description of these spaces. This section takes the form of an outline of the subject and may be used as a literature guide. Let $X^2$ be a compact complex surface with the natural orientation.

\begin{defn}
A four-dimensional $\emph{compact Kähler gradient shrinking Ricci soliton}$ is the compact complex manifold $X^2$ together with a compatible Kahler metric $g_{i\bar{j}}$ satisfying

$$R_{i\bar{j}} + \nabla_i \bar{\nabla}_{j} f = \rho g_{i\bar{j}}$$

for some $f \in C^\infty(\mathcal{M})$ and $\rho > 0$. 

\end{defn}

We begin our investigation of these spaces by noting, in light of the last section, that we have the following:

\begin{thm}
A compact Kähler gradient shrinking Ricci soliton $(X^2, g, f)$ must satisfy the strict inequality $2\chi + 3\tau > 0$. 

\end{thm}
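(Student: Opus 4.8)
The plan is to combine the local-to-global integral formulas of Section 3 with the special structure a Kähler metric imposes on the self-dual Weyl curvature, and then to invoke Lemma 4.5 to identify the resulting integral with the volume, which is manifestly positive. First I would work with the natural orientation of $X^2$ and assemble the combination $2\chi + 3\tau$ directly from the Chern-Gauss-Bonnet and Hirzebruch signature formulas, obtaining
$$2\chi + 3\tau = \frac{1}{4\pi^2}\int_{X} 2|W^+|^2 + \frac{R^2}{24} - \frac{|\mathring{Rc}|^2}{2}\,dV_g.$$
Here the two $|W^-|^2$ contributions cancel, so there is no ambiguity of sign and only the self-dual Weyl term survives.

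The crucial geometric input is the fact that, for a compact Kähler surface with its natural orientation, the self-dual Weyl operator $W^+ \colon \Lambda^+ \to \Lambda^+$ is completely determined by the scalar curvature. I would establish this by observing that the parallel Kähler form $\omega$ spans a distinguished line in $\Lambda^+$; decomposing $\Lambda^+$ into $\mathbb{R}\omega$ and its orthogonal complement and using $\nabla\omega = 0$, one finds that $W^+$ has eigenvalues $\tfrac{R}{6}, -\tfrac{R}{12}, -\tfrac{R}{12}$, and hence
$$|W^+|^2 = \left(\frac{R}{6}\right)^2 + 2\left(\frac{R}{12}\right)^2 = \frac{R^2}{24}.$$
Substituting this identity, the two Weyl terms collapse to $2|W^+|^2 + \frac{R^2}{24} = \frac{R^2}{8}$.

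It then remains to rewrite the traceless-Ricci term and apply the soliton lemma. Using $|\mathring{Rc}|^2 = |Rc|^2 - \frac{R^2}{4}$, the integrand becomes $\frac{R^2}{8} - \frac12\left(|Rc|^2 - \frac{R^2}{4}\right) = \frac{R^2}{4} - \frac{|Rc|^2}{2} = \frac12\left(\frac{R^2}{2} - |Rc|^2\right)$, so that
$$2\chi + 3\tau = \frac{1}{8\pi^2}\int_{X} \frac{R^2}{2} - |Rc|^2\,dV_g.$$
By Lemma 4.5 the integral on the right equals $Vol(X, g)$, whence $2\chi + 3\tau = \frac{Vol(X,g)}{8\pi^2} > 0$. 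Strictness is immediate because the volume of a compact Riemannian manifold is positive, so no separate appeal to positivity of $R$ is needed.

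The main obstacle is the Kähler identity $|W^+|^2 = R^2/24$; every other step is routine algebra together with the already-established Lemma 4.5. I expect the cleanest justification of the eigenvalue computation to come from the fact that on a Kähler surface the curvature operator preserves the splitting of $\Lambda^+$ induced by $\omega$ and acts by scalars on the two factors, a standard consequence of $\nabla\omega = 0$; I would also take care that the norm convention for $W^+$ matches the one used in the Gauss-Bonnet and signature formulas of Section 3, since an inconsistent normalization would spoil the clean cancellation.
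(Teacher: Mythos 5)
Your proposal is correct and follows essentially the same route as the paper: the paper's proof cites Derdzinski's identity $24|W^+|^2 = R^2$ for Kähler surfaces and combines it with the local-to-global formula for $2\chi + 3\tau$ and Lemma 4.5, exactly as you do. The only difference is that you sketch a proof of the Derdzinski identity via the eigenvalues $\left(\tfrac{R}{6}, -\tfrac{R}{12}, -\tfrac{R}{12}\right)$ of $W^+$ (a correct and standard argument using $\nabla\omega = 0$), where the paper simply cites the reference.
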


\begin{proof}
By the work of Derdzinski \cite{derd}, $24|W^+|^2 = R^2$ on Kähler $X^2$. Thus, the local to global theorem of section 3 and lemma 4.5 demonstrate the theorem trivially. 
\end{proof}

As far as the last section is concerned, this is a powerful result. However, the Kähler condition guarantees $\emph{more}$ topological information than this obstructions does. In fact, it yields a complete $\emph{classification}$ of compact Kähler gradient shrinking Ricci solitons! 

\begin{thm}
The only compact Kähler gradient shrinking Ricci solitons are the Kahler-Einstein metrics above and the following two non-Einstein cases: the Koiso-Cao soliton on $\mathbb{CP}^2 \# -\mathbb{CP}^2$, or the Wang-Zhu soliton on $\mathbb{CP}^2 \# -2\mathbb{CP}^2$
\end{thm}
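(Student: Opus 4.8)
The plan is to reduce the classification to the theory of Fano surfaces and then to invoke the existence and uniqueness theory for Kähler--Ricci solitons. The first step is to show that the underlying complex surface $X^2$ must be Fano, i.e. have positive first Chern class. Writing the Kähler soliton equation in terms of forms, the Ricci form of $g$ satisfies $\mathrm{Ric}(\omega) = \rho\,\omega - i\partial\bar\partial f$; since $i\partial\bar\partial f$ is exact and $[\mathrm{Ric}(\omega)] = 2\pi c_1(X)$, passing to cohomology gives $2\pi c_1(X) = \rho\,[\omega]$. As $\rho > 0$ and $[\omega]$ is a Kähler class, $c_1(X) > 0$, so $X^2$ is a del Pezzo surface. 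This is consistent with Theorem 2.3, since Fano manifolds are simply connected.

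Next I would invoke the classification of del Pezzo surfaces: every smooth compact complex surface with $c_1 > 0$ is biholomorphic to $\mathbb{CP}^1 \times \mathbb{CP}^1$, to $\mathbb{CP}^2$, or to $\mathbb{CP}^2$ blown up at $k$ points in general position with $1 \le k \le 8$. In the present notation the two surfaces appearing in the statement are the single and double blow-ups $\mathbb{CP}^2 \# -\mathbb{CP}^2$ and $\mathbb{CP}^2 \# -2\mathbb{CP}^2$. Thus the underlying complex geometry is completely pinned down, and the remaining task is to decide which metric each surface carries.

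The heart of the argument is then a dichotomy governed by the existence theory for Kähler--Einstein metrics and Kähler--Ricci solitons. On one hand, by Tian's theorem every del Pezzo surface except $\mathbb{CP}^2 \# -\mathbb{CP}^2$ and $\mathbb{CP}^2 \# -2\mathbb{CP}^2$ admits a Kähler--Einstein metric; these two exceptional surfaces carry a holomorphic vector field obstruction (non-reductive automorphism group, equivalently a non-vanishing Futaki invariant) which precludes a Kähler--Einstein metric but is precisely what is resolved by passing to a genuine soliton, namely the Koiso--Cao soliton on $\mathbb{CP}^2 \# -\mathbb{CP}^2$ and the Wang--Zhu soliton on $\mathbb{CP}^2 \# -2\mathbb{CP}^2$. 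On the other hand, the uniqueness theorem of Tian--Zhu asserts that a Fano manifold carries at most one Kähler--Ricci soliton up to holomorphic automorphism and scaling, and that such a soliton is Kähler--Einstein precisely when its soliton vector field, the $(1,0)$ part of $\nabla f$, vanishes. Given an arbitrary compact Kähler gradient shrinking Ricci soliton $(X^2, g, f)$, the first step identifies $X^2$ as a del Pezzo surface; if $X^2$ is not one of the two exceptional blow-ups, then the unique soliton is the Kähler--Einstein metric guaranteed by Tian, forcing $g$ to be Einstein, whereas if $X^2$ is one of the two exceptions, uniqueness forces $g$ to be the corresponding Koiso--Cao or Wang--Zhu soliton.

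The main obstacle is that each of the three ingredients, namely Tian's solution of the Kähler--Einstein problem on del Pezzo surfaces, the explicit construction of the non-Einstein solitons by Koiso, Cao, and Wang--Zhu, and the Tian--Zhu uniqueness theorem, is a substantial analytic result resting on the complex Monge--Amp\`ere equation and a modified continuity method. In keeping with the literature-study character of this section these are cited rather than reproved; the genuinely delicate point to get right is the treatment of the two exceptional surfaces, where the Futaki-type obstruction of Tian--Zhu must be used both to rule out Kähler--Einstein metrics and to single out the unique admissible soliton vector field, thereby certifying that the Koiso--Cao and Wang--Zhu solitons are the only non-Einstein possibilities.
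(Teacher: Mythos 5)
Your proposal is correct and follows essentially the same route as the paper's own argument: the Ricci-form computation showing $c_1(X) > 0$ (the paper's Lemma 5.4), identification of $X^2$ as a del Pezzo surface via Kodaira embedding, the classical classification of del Pezzo surfaces, and then Tian's Kähler--Einstein existence theorem combined with the Tian--Zhu uniqueness theorem to pin down the metric on each surface. Your treatment of the final dichotomy---explicitly using uniqueness to force $g$ to be the Kähler--Einstein metric on the non-exceptional surfaces and the Koiso--Cao or Wang--Zhu soliton on the two exceptional blow-ups---is, if anything, spelled out slightly more carefully than in the paper.
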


We begin outlining the proof of this result with a lemma regarding the so-called first $\emph{Chern class}$ of $X^2$. The first Chern class of $X^2$ is the first Chern class of the anti-canonical line bundle $K^{-1}$. The first Chern class of a line bundle $L$ is a cohomology class $c_1(L) \in H^2(X^2;\mathbb{R})$ which can be associated to $L$ in multiple ways - in particular via Chern-Weil theory, a classifying space construction, or Čech cohomology \cite{wells}. $K^{-1}$ is defined using the fact that isomorphism classes of complex line bundles over a complex manifold form a group - called the $\emph{Picard group}$ of $X^2$ - under the tensor product. $K^{-1}$ is simply the inverse of the canonical line bundle of top forms $K \equiv \Omega_X^2$ in this group. In the Kähler setting, a concrete geometric way to view $c_1(X^2)$ is as the cohomology class of the Ricci form, and indeed this is the interpretation we will use. 

\begin{lem}
A compact Kähler gradient shrinking Ricci soliton must have positive first Chern class, meaning that it may be represented by a Kähler form. 
\end{lem}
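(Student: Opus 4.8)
The plan is to exploit the Kähler soliton equation directly at the level of $(1,1)$-forms, using the interpretation of $c_1(X^2)$ as the cohomology class of the Ricci form stated just before the lemma. First I would translate the defining equation into form language. Recall that on a Kähler manifold the Ricci form is $\rho_\omega = i R_{i\bar{j}}\, dz^i \wedge d\bar{z}^j$, that the Kähler form is $\omega = i g_{i\bar{j}}\, dz^i \wedge d\bar{z}^j$, and that the mixed Christoffel symbols vanish, so the complex Hessian satisfies $\nabla_i \bar{\nabla}_j f = \partial_i \partial_{\bar{j}} f$. Contracting the soliton equation $R_{i\bar{j}} + \nabla_i \bar{\nabla}_j f = \rho g_{i\bar{j}}$ against $i\, dz^i \wedge d\bar{z}^j$ then yields the form identity
$$\rho_\omega + i\partial\bar{\partial} f = \rho\,\omega.$$

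Next I would pass to de Rham cohomology. The key observation is that $i\partial\bar{\partial} f$ is a globally defined, $d$-exact real $(1,1)$-form, since $i\partial\bar{\partial} f = \tfrac{1}{2} d\big(i(\bar{\partial} - \partial)f\big)$; hence it represents the zero class. Because $[\rho_\omega] = 2\pi c_1(X^2)$, the displayed identity gives
$$2\pi c_1(X^2) = \rho\,[\omega], \qquad \text{i.e.} \qquad c_1(X^2) = \tfrac{\rho}{2\pi}[\omega].$$

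Finally I would conclude positivity. Since $\rho > 0$ and $[\omega]$ is a Kähler class, $c_1(X^2)$ is a positive scalar multiple of a Kähler class, hence positive; more concretely, the identity above shows that $c_1(X^2)$ is represented by $\tfrac{\rho}{2\pi}\omega$, which is itself a Kähler form (a positive multiple of $\omega$). This is exactly the asserted statement.

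I do not expect a serious obstacle here: the content is the single cohomological observation that the Hessian term drops out. The only points requiring care are the bookkeeping of the complex-geometry conventions — the factor of $2\pi$ in $[\rho_\omega] = 2\pi c_1$, and the identification $\nabla_i\bar{\nabla}_j f = \partial_i\partial_{\bar{j}} f$, which holds precisely because $g$ is Kähler — together with verifying that $i\partial\bar{\partial} f$ is genuinely $d$-exact rather than merely $\partial\bar{\partial}$-closed. Once these conventions are fixed, the result follows immediately from $\rho > 0$.
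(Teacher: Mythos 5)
Your proposal is correct and follows essentially the same route as the paper's proof: pass the soliton equation to cohomology, observe that the $\partial\bar{\partial}f$ term is exact and drops out, and conclude that $c_1(X^2)$ is a positive multiple of the Kähler class $[\omega]$. Your version is simply more careful about conventions (the $2\pi$ normalization and the explicit verification that $i\partial\bar{\partial}f$ is $d$-exact via $i\partial\bar{\partial}f = \tfrac{1}{2}d\bigl(i(\bar{\partial}-\partial)f\bigr)$), which the paper leaves implicit.
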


\begin{proof}
Given that the first Chern class of a Kähler manifold is the cohomology class of the Ricci form, we may use the soliton equation to yield the following:

$$[Rc] = [\rho \omega - \partial \bar{\partial} f] = \rho [\omega]$$

Since $c_1(X^2) = [Rc]$ may be represented by a Kähler class, it is positive by definition. 

\end{proof}

This lemma tells us that compact Kähler gradient shrinking Ricci solitons are in fact examples of familiar spaces. We recall some basic facts from algebraic geometry regarding $\emph{del-Pezzo surfaces}$. 

\begin{defn}

A compact complex surface $X^2$ is a $\emph{del-Pezzo surface}$ if its anti-canonical bundle $K^{-1}$ is ample.

\end{defn}

  Intuitively speaking, a line bundle $L$ is $\emph{ample}$ if one can take powers of it to obtain a line bundle $L'$ such that $X^2$ embeds in some projective space such that the restriction of the tautological bundle on that projective space to the image of $X^2$ is $L'$. For our purposes, it suffices to consider the Kodaira Embedding Theorem \cite{huy}:

 \begin{thm}
Let $X^2$ be a compact Kähler manifold. A line bundle $L$ on $X^2$ is positive if and only if $L$ is ample. In this case, $X^2$ is projective.
 \end{thm}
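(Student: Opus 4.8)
The plan is to prove the two directions separately, with the forward implication (positive $\Rightarrow$ ample) carrying essentially all of the difficulty. For the easy direction, suppose $L$ is ample, so that $L^{\otimes m}$ is very ample for some $m > 0$ and furnishes an embedding $\iota : X^2 \hookrightarrow \mathbb{P}^N$ with $\iota^*\mathcal{O}(1) \cong L^{\otimes m}$. Pulling back the Fubini-Study form $\omega_{FS}$, which is a Kähler form representing $c_1(\mathcal{O}(1))$, I obtain a positive $(1,1)$-form representing $m\, c_1(L)$; dividing by $m$ exhibits $c_1(L)$ as a Kähler class, so $L$ is positive. The final assertion that $X^2$ is projective then follows from this same embedding together with Chow's theorem, which guarantees that the compact complex submanifold $\iota(X^2) \subset \mathbb{P}^N$ is cut out by homogeneous polynomials and hence is a projective variety.

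For the hard direction I would show that positivity of $L$ forces $L^{\otimes k}$ to be very ample for all sufficiently large $k$, which yields ampleness by definition. The engine of the argument is the \emph{Kodaira Vanishing Theorem}: if $L$ is positive then $H^q(X^2, K_{X^2} \otimes L^{\otimes k}) = 0$ for every $q \geq 1$, a consequence of the Bochner-Kodaira-Nakano identity and Hodge theory applied to the curvature of a positively curved Hermitian metric on $L$. Very ampleness of $L^{\otimes k}$ amounts to three properties of the linear system $|L^{\otimes k}|$: base-point freeness, separation of points, and separation of tangent directions. I would establish each by a blow-up argument: given points $p \neq q$ (or a single point $p$ together with a tangent vector), let $\sigma : \hat{X} \to X^2$ be the blow-up at the relevant points, with exceptional divisors $E_i$ and $K_{\hat{X}} \cong \sigma^* K_{X^2} \otimes \mathcal{O}(\sum E_i)$ in complex dimension two.

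The technical heart lies in checking that for $k$ large the twisted pullback $\sigma^* L^{\otimes k} \otimes \mathcal{O}(-\sum a_i E_i)$ remains positive on $\hat{X}$ for appropriate multiplicities $a_i$: positivity of $k\omega$ must be made to dominate the negative contribution of the exceptional divisors, which I would arrange by patching the Kähler form pulled back from $X^2$ with a local model near the exceptional locus. Once this positivity is secured, Kodaira Vanishing on $\hat{X}$ applied to this bundle twisted by $K_{\hat{X}}$ kills the obstruction in $H^1$, and the long exact sequence associated to the ideal sheaf of the blown-up points then shows that the restriction map from $H^0(X^2, L^{\otimes k})$ to the relevant space of jets at $p$ and $q$ is surjective. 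Surjectivity onto zeroth-order jets at two points gives base-point freeness and separation of points, and surjectivity onto first-order jets at a single point gives separation of tangent directions. I expect the main obstacle to be exactly this positivity bookkeeping on the blow-up — choosing the exceptional multiplicities and verifying that the resulting line bundle, after the canonical twist, satisfies the hypotheses of Kodaira Vanishing while still cutting out precisely the jets needed to force very ampleness.
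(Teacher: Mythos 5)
The paper does not actually prove this statement: it is the classical Kodaira Embedding Theorem, quoted as background (with a citation to Huybrechts \cite{huy}) in order to recognize compact K\"ahler shrinking solitons as del Pezzo surfaces. So there is no internal proof to compare against; what you have written is the standard textbook argument---essentially the proof in the very reference the paper cites. Your outline is correct: ample $\Rightarrow$ positive via pullback of the Fubini--Study form under a projective embedding, projectivity of $X^2$ via Chow's theorem, and positive $\Rightarrow$ ample via Kodaira vanishing applied on blow-ups, with the canonical bundle formula $K_{\hat{X}} \cong \sigma^* K_{X^2} \otimes \mathcal{O}(\sum E_i)$ correct in complex dimension two. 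Two points you would need to flesh out to make the sketch complete: first, the identification $H^0(\hat{X}, \sigma^* L^{\otimes k}) \cong H^0(X^2, L^{\otimes k})$ (e.g.\ from $\sigma_* \mathcal{O}_{\hat{X}} = \mathcal{O}_{X^2}$ and the projection formula), without which surjectivity onto jets upstairs says nothing about sections of $L^{\otimes k}$ downstairs; second, uniformity of $k$: the blow-up argument as you state it produces, for each pair of points $(p,q)$, some $k_0(p,q)$ beyond which separation holds, and one must combine compactness of $X^2 \times X^2$ with the openness of the separation conditions to extract a single $k$ that works for all points and tangent directions simultaneously. Neither issue is a flaw in the approach---both are handled in the standard treatments---but they are exactly where the remaining work lies.
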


 We may now recognize compact Kähler gradient shrinking Ricci solitons $X^2$ as del-Pezzo surfaces, since the first Chern class of the anti-canonical bundle is positive, and thus the anti-canonical bundle is ample. Recognizing compact Kähler gradient shrinking Ricci solitons as del-Pezzo surfaces is extremely powerful, as it is a classical result of algebraic geometry that these are classified:

 \begin{thm}
The only del-Pezzo surfaces are $\mathbb{CP}^2 \# -k\mathbb{CP}^2$ for $0 \leq k \leq 8$ and $S^2 \times S^2$. 
 \end{thm}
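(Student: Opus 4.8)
The plan is to reduce the classification to the structure theory of rational surfaces together with the numerical constraint imposed by ampleness of $-K$. First I would establish that any del-Pezzo surface is rational. Since $-K_{X}$ is ample, every positive power $mK_{X}$ with $m>0$ is the dual of an ample bundle and hence has no nonzero sections, so all plurigenera vanish; in particular $P_2 = h^0(2K_X) = 0$. Kodaira vanishing applied to the ample bundle $-K_X$ gives $H^i(X, \mathcal{O}_X) = H^i(X, K_X + (-K_X)) = 0$ for $i > 0$, so the irregularity $q = h^1(\mathcal{O}_X)$ vanishes. By Castelnuovo's rationality criterion ($q = P_2 = 0$), $X$ is rational.

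Next I would invoke the classification of minimal models in the rational case: every rational surface is obtained from $\mathbb{P}^2$ or from a Hirzebruch surface $\mathbb{F}_n = \mathbb{P}(\mathcal{O} \oplus \mathcal{O}(n))$ by a finite sequence of blow-ups. I then determine which \emph{minimal} rational surfaces are del-Pezzo. Using the Nakai--Moishezon criterion, I would test ampleness of $-K_{\mathbb{F}_n}$ against the negative section $C_0$ with $C_0^2 = -n$: a direct computation gives $-K_{\mathbb{F}_n} \cdot C_0 = 2 - n$, which is positive only for $n \in \{0,1\}$. Since $\mathbb{F}_1$ is itself $\mathbb{P}^2$ blown up at one point (hence non-minimal), the minimal del-Pezzo surfaces are exactly $\mathbb{P}^2$ and $\mathbb{F}_0 = \mathbb{P}^1 \times \mathbb{P}^1$.

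It remains to bound the number of blow-ups and identify the diffeomorphism types. Ampleness forces $(-K_X)^2 = K_X^2 > 0$, and each blow-up of a point drops $K^2$ by exactly one; starting from $K^2 = 9$ on $\mathbb{P}^2$ (respectively $K^2 = 8$ on $\mathbb{F}_0$) this permits at most $8$ blow-ups. Topologically, blowing up a point is a connected sum with $-\mathbb{CP}^2$, so a $k$-fold blow-up of $\mathbb{P}^2$ is diffeomorphic to $\mathbb{CP}^2 \# -k\mathbb{CP}^2$ for $0 \le k \le 8$; and since $\mathbb{F}_0 \# -\mathbb{CP}^2 \cong \mathbb{CP}^2 \# -2\mathbb{CP}^2$, every blown-up Hirzebruch surface already appears in this list, leaving only $\mathbb{P}^1 \times \mathbb{P}^1 \cong S^2 \times S^2$ as a genuinely distinct type (distinguished from $\mathbb{CP}^2 \# -\mathbb{CP}^2$ by its even intersection form). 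This produces exactly the stated list.

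The main obstacle is conceptual rather than computational: the reduction is this clean only because the target is a classification up to diffeomorphism. In the finer algebraic classification one must further verify that a blow-up of $\mathbb{P}^2$ at $k \le 8$ points is del-Pezzo precisely when the points are in general position (no three collinear, no six on a conic, and so on), which requires checking Nakai--Moishezon against all potential negative curves. For the topological statement this general-position analysis is unnecessary, since the diffeomorphism type depends only on $k$; the essential inputs are the rationality argument (Castelnuovo together with Kodaira vanishing) and the minimal-model theory, both of which I would cite rather than reprove, in keeping with the literature-review character of this section.
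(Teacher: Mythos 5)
The paper itself offers no proof of this statement---it is quoted in Section 5 as a classical result of algebraic geometry---so your proposal stands or falls on its own merits, and as written it contains one genuine gap, located at the junction between your second and third paragraphs. Minimal model theory tells you that $X$, being rational, is an iterated blow-up of \emph{some} minimal rational surface, namely $\mathbb{P}^2$ or $\mathbb{F}_n$ with $n \neq 1$. Your Nakai--Moishezon computation $-K_{\mathbb{F}_n} \cdot C_0 = 2 - n$ identifies which of these \emph{minimal} surfaces are themselves del-Pezzo, but that is not what the argument needs: the del-Pezzo hypothesis sits on $X$, not on its minimal model, and nothing you have said transports ampleness of $-K_X$ down the contraction tower. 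A priori a del-Pezzo surface could dominate $\mathbb{F}_5$; your third paragraph silently begins the $K^2$ count from $\mathbb{P}^2$ and $\mathbb{F}_0$ alone, so that case is never excluded, and the argument as written would never even mention such a surface.

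The gap is real but closable by a short standard argument, in either of two ways. (i) Adjunction: if $-K_X$ is ample then every irreducible curve $C \subset X$ has $K_X \cdot C \leq -1$, so $C^2 = 2p_a(C) - 2 - K_X \cdot C \geq -1$; but if $X$ dominated $\mathbb{F}_n$ with $n \geq 2$, the proper transform of the negative section $C_0$ would be an irreducible curve on $X$ of self-intersection at most $C_0^2 = -n \leq -2$, a contradiction. (ii) Blow-down stability: if $\pi : X \to Y$ contracts a $(-1)$-curve $E$ and $-K_X$ is ample, then $-K_Y$ is ample by Nakai--Moishezon, since $(-K_Y)^2 = (-K_X)^2 + 1 > 0$ and, writing $\tilde{C}$ for the proper transform of an irreducible curve $C \subset Y$ with multiplicity $m$ at the blown-up point, one computes $-K_Y \cdot C = -K_X \cdot \tilde{C} + m > 0$; induction then shows that the minimal model of a del-Pezzo surface is itself a minimal del-Pezzo surface, hence $\mathbb{P}^2$ or $\mathbb{F}_0$ by your computation. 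With either patch inserted, the rest of your argument---rationality via Kodaira vanishing plus Castelnuovo, the bound $k \leq 8$ from $K^2 > 0$ dropping by one per blow-up, the identification of a blow-up with a connected sum with $-\mathbb{CP}^2$, and the diffeomorphism $(S^2 \times S^2) \# -\mathbb{CP}^2 \cong \mathbb{CP}^2 \# -2\mathbb{CP}^2$---is correct, as is your observation that general position of the blown-up points is irrelevant for the statement at the level of diffeomorphism type, since only the forward inclusion (every del-Pezzo surface appears in the list) is being asserted.
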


For the sake of continuing the classification, we now note what is known in the Einstein case. As in section $2$, compact Kähler gradient shrinking Ricci solitons are generalizations of the sister notion of positive $\emph{Kähler Einstein}$ manifolds, who satisfy

$$R_{i\bar{j}} = \rho g_{i\bar{j}}$$

for some $\rho > 0$. These manifolds have been completely classified by Tian \cite{tianOne}

\begin{thm}
A compact complex surface $(X^2, J)$ admits a compatible Kähler-Einstein metric with R $>$ 0 if and only if its anti-canonical line bundle $K^{-1}$ is ample and its Lie algebra of holomorphic vector fields is reductive. 
\end{thm}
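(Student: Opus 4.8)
The plan is to prove the two directions of the biconditional separately, treating necessity as a package of classical results and reserving the real difficulty for sufficiency. For necessity, suppose $(X^2, J)$ carries a Kähler--Einstein metric $g$ with $R > 0$. First I would observe that $\rho > 0$ forces the Ricci form to be a \emph{positive} representative of $c_1(X^2)$, exactly as in \fullref{lem:5.5} reasoning (the Ricci form equals $\rho\,\omega_g$ up to an exact term), so that $K^{-1}$ is positive and hence ample by the Kodaira Embedding Theorem (\fullref{thm:kodaira}). The reductivity of the Lie algebra $\mathfrak{h}(X^2)$ of holomorphic vector fields is the content of Matsushima's theorem: the Kähler--Einstein condition lets one decompose each holomorphic vector field, via Hodge theory and the Bochner technique, into a piece generated by the isometry group of $g$ and a piece dual to it, exhibiting $\mathfrak{h}(X^2)$ as the complexification of the compact (hence reductive) isometry Lie algebra. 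I would cite this decomposition rather than reproduce it.

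For sufficiency, assume $K^{-1}$ is ample and $\mathfrak{h}(X^2)$ is reductive. The first move is purely algebro-geometric: ampleness of $K^{-1}$ means $X^2$ is a del-Pezzo surface, so by the classification (\fullref{thm:delpezzo}) it is $\mathbb{CP}^2$, $S^2 \times S^2$, or a blow-up $\mathbb{CP}^2 \# -k\mathbb{CP}^2$ with $1 \le k \le 8$. The metrics on $\mathbb{CP}^2$ and $S^2 \times S^2$ are the Fubini--Study (product) metrics, which are manifestly Kähler--Einstein, so those cases are immediate. The cases $k = 1, 2$ are excluded by the hypothesis: here I would check directly that $\mathfrak{h}(X^2)$ is \emph{not} reductive --- these are precisely the surfaces underlying the Koiso--Cao and Wang--Zhu non-Einstein solitons --- so the reductivity assumption rules them out, consistent with Matsushima's obstruction. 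For $3 \le k \le 8$ the automorphism groups are either finite or have reductive identity component, so the hypothesis is satisfied and existence of a Kähler--Einstein metric must be established.

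That existence statement is where I expect the main obstacle to lie, and it is the heart of \cite{tianOne}. The plan is the continuity method of Aubin and Yau: one deforms the complex Monge--Ampère equation $(\omega_g + \sqrt{-1}\,\partial\bar\partial\varphi)^2 = e^{F - t\varphi}\,\omega_g^2$ from $t = 0$ (solvable by Yau's theorem) toward $t = 1$ (the Kähler--Einstein equation), and the entire difficulty collapses to an a priori $C^0$ bound on $\varphi$ uniform in $t$, the higher-order estimates following from standard elliptic theory and openness of the solution set. This $C^0$ estimate is exactly what fails in general. I would invoke Tian's $\alpha$-invariant criterion, namely that a positive Kähler--Einstein metric exists whenever $\alpha(X^2) > \tfrac{2}{3}$ (the value $\tfrac{n}{n+1}$ in complex dimension $n = 2$), together with his computation of $\alpha$ for these del-Pezzo surfaces, to dispatch the cases it covers.

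The hard part will be the borderline del-Pezzo surfaces not reached by the crude $\alpha$-estimate. Here the finer ingredient is Tian's partial $C^0$ estimate, which identifies any limit of a hypothetical blow-up sequence of metrics with an algebraic degeneration of $X^2$; the reductivity hypothesis then obstructs the existence of such a destabilizing degeneration, forcing the uniform $C^0$ bound and hence existence. I would present this interplay between the partial $C^0$ estimate and the algebraic geometry of the degeneration only in outline, attributing the analytic substance to Tian, since a full treatment is well beyond the scope of this literature-study section.
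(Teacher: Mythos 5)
The paper offers no proof of this statement: it is quoted verbatim as Tian's classification and cited to \cite{tianOne}, so there is no internal argument to compare yours against. Your outline is, in substance, a faithful roadmap of how the result is actually proved in the literature, and it is strictly more detailed than the paper's treatment. The necessity direction is right and standard: a positive K\"ahler--Einstein metric makes the Ricci form a K\"ahler representative of $c_1(X^2)$, so $K^{-1}$ is positive and hence ample by Kodaira embedding (the same argument the paper uses for solitons in its lemma on positivity of the first Chern class), and reductivity of the algebra of holomorphic vector fields is exactly Matsushima's theorem. On the sufficiency side, the reduction to the del-Pezzo classification, the exclusion of $\mathbb{CP}^2 \# -\mathbb{CP}^2$ and $\mathbb{CP}^2 \# -2\mathbb{CP}^2$ by non-reductivity of their automorphism algebras, and the appeal to the continuity method, Yau's theorem at $t=0$, the $\alpha$-invariant threshold $\alpha > \tfrac{2}{3}$, and the partial $C^0$ estimate are all correctly placed pieces of Tian's actual proof.

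One caveat deserves flagging. Your closing claim that ``the reductivity hypothesis then obstructs the existence of such a destabilizing degeneration'' misattributes the mechanism. For the surfaces where existence must actually be proved ($\mathbb{CP}^2 \# -k\mathbb{CP}^2$, $3 \le k \le 8$), reductivity holds automatically --- the automorphism groups are finite or reductive --- and it plays no role in the analytic argument; the degenerate limits arising from the partial $C^0$ estimate are ruled out in \cite{tianOne} by the geometry of plurianticanonical divisors on del-Pezzo surfaces, not by a formal reductivity-versus-degeneration argument. Reductivity enters the theorem only through Matsushima, i.e., only to exclude $k = 1, 2$. Since you defer the analytic substance to Tian anyway, this does not break your outline, but as written it suggests a cleaner logical role for the hypothesis than it has. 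Note finally that your proposal, like the paper, is ultimately a citation: the genuinely hard content --- the uniform $C^0$ bound for the remaining del-Pezzo surfaces --- is invoked, not reproduced, which is the appropriate choice for this literature-survey section.
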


This result restricts the spaces which admit such metrics to the following: $\mathbb{CP}^2$, $S^2 \times S^2$, $\mathbb{CP}^2 \# -k(\mathbb{CP}^2)$, $3 \leq k \leq 8$. 

 A uniqueness result due to Tian \cite{tianTwo} \cite{tianThree} now comes in handy:

\begin{thm}
There exists at most one Kähler-Ricci soliton on any compact Kähler manifold with positive first Chern class modulo holomorphic automorphisms.
\end{thm}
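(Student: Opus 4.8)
The plan is to follow the variational strategy of Tian and Zhu. The structural fact I would establish first is that on a Kähler–Ricci soliton the soliton vector field $V = \nabla^{1,0} f = g^{i\bar{j}}(\partial_{\bar{j}}f)\,\partial_{z^i}$ is \emph{holomorphic}: differentiating the soliton equation and using the contracted Bianchi identity together with the Kähler symmetries of the curvature forces the $(2,0)$-part of the Hessian of $f$ to vanish, so that $\bar{\partial}V = 0$. This places the problem inside the finite-dimensional Lie algebra $\eta(X^2)$ of holomorphic vector fields, a maximal compact subalgebra of which has a center containing $V$.

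Next I would show that the soliton vector field is itself unique up to conjugacy, so that two solitons may be compared directly. The holomorphy potential $\theta_V$ of $V$ defines a \emph{modified Futaki invariant}
$$F_V(W) = \int_{X^2} W(h)\, e^{\theta_V}\, \omega^n, \qquad W \in \eta(X^2),$$
where $h$ is the Ricci potential; this pairing is independent of the metric chosen in $c_1(X^2)$. The soliton equation forces $F_V$ to vanish, and strict monotonicity of $s \mapsto F_{sV}(V)$ shows there is exactly one such $V$ in the center of a maximal compact subalgebra. Hence, after pulling one soliton back by an automorphism, both may be assumed to carry the \emph{same} vector field $V$.

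With $V$ fixed, I would work on the space of Kähler potentials $\mathcal{H} = \{\phi : \omega_\phi := \omega + i\partial\bar{\partial}\phi > 0\}$ and introduce the \emph{modified Mabuchi functional} $\mu_V$, normalized so that its Euler–Lagrange equation is exactly the Kähler–Ricci soliton equation with soliton field $V$; its critical points are therefore the metrics we wish to classify. The heart of the argument is to prove that $\mu_V$ is convex along the $C^{1,1}$ geodesics of $\mathcal{H}$ (weak solutions of the homogeneous complex Monge–Ampère equation), and strictly convex transverse to the directions generated by holomorphic vector fields commuting with $V$. Given two solitons $\phi_0, \phi_1$, both critical for $\mu_V$, I would join them by such a geodesic: a convex function on $[0,1]$ that is critical at both endpoints is constant, so the geodesic lies in the non-strictly-convex locus, which consists precisely of potentials moved by a one-parameter subgroup of $\mathrm{Aut}_0(X^2)$. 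This yields $\omega_{\phi_1} = \sigma^* \omega_{\phi_0}$ for some automorphism $\sigma$, giving the claimed uniqueness modulo $\mathrm{Aut}(X^2)$.

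The main obstacle is the convexity step. Since Mabuchi geodesics are in general only $C^{1,1}$, one cannot naively differentiate $\mu_V$ twice along them; making both the convexity and the rigidity statement (the identification of flat directions with genuine automorphisms) rigorous requires either a Berndtsson-type plurisubharmonicity argument for the associated fibration functional or the delicate a priori estimates and continuity-method bookkeeping of the original Tian–Zhu proof. A secondary difficulty is the uniqueness of the soliton vector field in the second step, which rests on the monotonicity of the modified Futaki invariant and the reductivity of the automorphism algebra.
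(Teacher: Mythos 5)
Your proposal cannot be checked against a proof in the paper, because the paper does not prove this statement at all: it is imported as a black box from Tian--Zhu \cite{tianTwo}, \cite{tianThree} in a literature-review section, and is used only to finish the classification of compact K\"ahler shrinkers. So the only question is whether your plan would stand on its own, and there it is an accurate roadmap of the literature but not a proof. The architecture you describe is the correct one: holomorphy of the soliton field, uniqueness of the field via the modified Futaki invariant (this is the content of \cite{tianThree}), then uniqueness of the metric once the field is fixed (the content of \cite{tianTwo}, which in fact runs a Bando--Mabuchi-style continuity method; the geodesic-convexity route you sketch is Berndtsson's later refinement). But every load-bearing step is deferred, as you yourself acknowledge: convexity of the modified Mabuchi functional along merely $C^{1,1}$ geodesics, the rigidity statement identifying the flat directions with orbits of $\mathrm{Aut}_0(X^2)$, and the monotonicity/uniqueness argument for the soliton vector field are each substantial theorems, and they are precisely where the content of the result lies. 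A plan whose every essential step is ``this requires either Berndtsson or the original Tian--Zhu estimates'' is a correct map of the terrain, not a traversal of it.

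Two concrete mathematical corrections to the parts you do spell out. First, with the paper's Definition 5.1 --- the $(1,1)$-equation $R_{i\bar j} + \nabla_i\bar\nabla_j f = \rho g_{i\bar j}$ alone --- holomorphy of $V=\nabla^{1,0}f$ cannot be derived by any amount of differentiating: by the $\partial\bar\partial$-lemma, \emph{every} K\"ahler metric in the class $\tfrac{1}{\rho}c_1(X^2)$ satisfies that equation with $f$ the Ricci potential, so the $(1,1)$-equation carries no information beyond the cohomological condition. What you need is the full Riemannian soliton equation $Rc+\nabla^2 f=\rho g$, and then holomorphy is immediate from type considerations: $Rc$ and $g$ are $J$-invariant, hence so is $\nabla^2 f$, i.e.\ its $(2,0)$-part $\nabla_i\nabla_j f$ vanishes --- no Bianchi identity is involved. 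Second, your formula for the modified Futaki invariant is missing a term: Tian--Zhu's invariant is $F_V(W)=\int_{X^2} W(h-\theta_V)\,e^{\theta_V}\omega^n$, not $\int_{X^2} W(h)\,e^{\theta_V}\omega^n$; with the $-\theta_V$ omitted, the manifest vanishing at a soliton (where $h-\theta_V$ is constant) and the independence of the choice of metric in $c_1$ --- the two properties your argument relies on --- are both lost.
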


Therefore, the Kähler-Einstein metrics above are all unique. Furthermore, the Koiso-Cao soliton on $\mathbb{CP}^2 \# -\mathbb{CP}^2$, and the Wang-Zhu soliton on $\mathbb{CP}^2 \# -2\mathbb{CP}^2$ exist and by theorem 5.11 are unique as well.  Thus, up to holomorphic automorphisms, the only compact Kähler Ricci solitons in dimension four are the Einstein manifolds classified by Tian, the Koiso-Cao soliton on $\mathbb{CP}^2 \# -\mathbb{CP}^2$, and the Wang-Zhu soliton on $\mathbb{CP}^2 \# -2\mathbb{CP}^2$ as claimed, and one has complete knowledge of the topology of compact Kähler gradient shrinking Ricci solitons.

One might ask if these results could be extended to include more general structures, e.g. Symplectic. While a complete classification is beyond the scope of this work, an obstruction in this more general case follows from the powerful work of Taubes \cite{taubes} which utilizes Seiberg-Witten theory.

\begin{thm} A symplectic four-manifold with $b_+ > 1$ admits no metrics of positive scalar curvature. 
\end{thm}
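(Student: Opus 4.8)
The plan is to derive a contradiction by playing two facts about the Seiberg--Witten invariants against one another. Recall that for a closed oriented smooth four-manifold with $b_+ > 1$, each spin$^c$ structure $\mathfrak{s}$ determines a moduli space of solutions to the Seiberg--Witten equations whose signed count $SW(\mathfrak{s})$ is a diffeomorphism invariant, independent of the Riemannian metric and the generic perturbation used to define it. The hypothesis $b_+ > 1$ is precisely what is consumed here: it guarantees that the space of self-dual harmonic forms is large enough that a generic path of metrics avoids the reducible locus, so $SW(\mathfrak{s})$ cannot jump between metric-dependent chambers. Absent this hypothesis (the case $b_+ = 1$) the invariant is only chamber-wise defined and the argument breaks down, which is why the theorem is stated with $b_+ > 1$.

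First I would invoke the deep theorem of Taubes \cite{taubes}: on a closed symplectic four-manifold $(M^4, \omega)$, the Seiberg--Witten invariant of the canonical spin$^c$ structure $\mathfrak{s}_{\mathrm{can}}$ determined by $\omega$ together with a compatible almost-complex structure satisfies $SW(\mathfrak{s}_{\mathrm{can}}) = \pm 1$, and in particular is nonzero. The proof of this input is the substantial part of the whole argument: one uses the symplectic form $\omega$ itself as a large perturbation of the curvature equation and performs a delicate analysis, via Seiberg--Witten theory, showing that the perturbed moduli space consists of a single transversally cut out, reducible-free solution. I would cite this rather than reprove it.

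Next I would establish the vanishing side through the Weitzenb\"ock--Lichnerowicz formula. Suppose for contradiction that $M$ carries a metric $g$ of positive scalar curvature $R > 0$, and let $(A, \Phi)$ solve the unperturbed Seiberg--Witten equations $D_A \Phi = 0$ and $F_A^+ = q(\Phi)$, where $q$ is the quadratic moment map on the positive spinors. The Weitzenb\"ock formula for the spin$^c$ Dirac operator reads
\[
D_A^* D_A \Phi \;=\; \nabla_A^* \nabla_A \Phi \;+\; \tfrac{R}{4}\,\Phi \;+\; \tfrac{1}{2}\,c(F_A^+)\,\Phi .
\]
Pairing with $\Phi$, integrating over $M$, and substituting the curvature equation $F_A^+ = q(\Phi)$ (so that the Clifford term contributes a positive multiple of $|\Phi|^4$) yields the integral identity
\[
0 \;=\; \int_M \Big( |\nabla_A \Phi|^2 + \tfrac{R}{4}\,|\Phi|^2 + \tfrac{1}{4}\,|\Phi|^4 \Big)\, dV_g .
\]
Since $R > 0$, every term of the integrand is nonnegative, forcing $\Phi \equiv 0$; hence every solution is reducible. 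For $b_+ > 1$ a generic metric admits no reducible solutions, so the moduli space is empty and $SW(\mathfrak{s}) = 0$ for every spin$^c$ structure $\mathfrak{s}$.

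Finally I would combine the two halves: the existence of a positive scalar curvature metric forces $SW(\mathfrak{s}_{\mathrm{can}}) = 0$, contradicting Taubes' computation $SW(\mathfrak{s}_{\mathrm{can}}) = \pm 1$. Therefore no metric of positive scalar curvature can exist on $M$. The main obstacle in this argument is entirely the nonvanishing input of Taubes, which rests on the full machinery of Seiberg--Witten theory and the hard analysis of the $\omega$-perturbed equations; by contrast, the vanishing half is a short Bochner-type computation, and the well-definedness of the invariant is exactly where the hypothesis $b_+ > 1$ enters.
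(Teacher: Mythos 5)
The paper offers no proof of this statement at all: it is quoted as a black-box consequence of Taubes' work \cite{taubes}, so there is no internal argument to compare against, and your proposal correctly reconstructs the standard mechanism behind that citation — Taubes' nonvanishing of the Seiberg--Witten invariant of the canonical spin$^c$ structure played against the Lichnerowicz--Weitzenb\"ock vanishing of all Seiberg--Witten invariants in the presence of positive scalar curvature, with $b_+ > 1$ ensuring the invariant is metric-independent. Your sketch is sound, with only the standard gloss left implicit: since your Weitzenb\"ock argument rules out irreducibles for the given (possibly non-generic) positive scalar curvature metric, one should either add a small generic perturbation to the equations or perturb the metric within the open positive-scalar-curvature condition to also exclude reducibles before concluding $SW \equiv 0$.
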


Thus given theorem 2.2 we have the immediate corollary:

\begin{cor}
A compact symplectic gradient shrinking Ricci soliton $\mathcal{M}^4$ must have $b_+ \leq 1$. 
\end{cor}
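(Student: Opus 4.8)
The plan is to argue by contradiction, combining the two heavy results that precede the statement. The point is that a compact symplectic gradient shrinking Ricci soliton $\mathcal{M}^4$ carries two pieces of data simultaneously: a symplectic structure and a soliton metric $g$. The strategy is to extract from each the hypotheses of Theorem 5.13 and to derive a contradiction if $b_+ > 1$.

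First I would invoke Theorem 2.2, which guarantees that the soliton metric $g$ has strictly positive scalar curvature $R > 0$ everywhere on $\mathcal{M}^4$. This is the crucial translation step: it reinterprets the purely analytic/geometric soliton condition as the assertion that $\mathcal{M}^4$ admits \emph{some} Riemannian metric of positive scalar curvature, namely $g$ itself. Since $\mathcal{M}^4$ is by assumption also symplectic, it is exactly the type of manifold to which Theorem 5.13 applies.

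Next I would suppose, for contradiction, that $b_+ > 1$. Under this assumption Theorem 5.13 (Taubes) asserts that $\mathcal{M}^4$ admits \emph{no} metric of positive scalar curvature. This directly contradicts the existence of the soliton metric $g$ produced in the previous step, which does have $R > 0$. Hence the assumption $b_+ > 1$ is untenable, and we conclude $b_+ \leq 1$.

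Since both Theorem 2.2 and Theorem 5.13 are taken as given, there is no genuine obstacle to overcome in the corollary itself; the only substantive content is the observation that the soliton condition supplies precisely the positive-scalar-curvature metric forbidden by Taubes' theorem. The entire argument is therefore a short contrapositive, and I would present it in a few lines once the translation through Theorem 2.2 is made explicit.
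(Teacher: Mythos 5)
Your proposal is correct and is exactly the paper's argument: the paper treats the corollary as immediate by combining Theorem 2.2 (the soliton metric has $R>0$) with Taubes' theorem (a symplectic four-manifold with $b_+>1$ admits no positive scalar curvature metric), which is precisely your contrapositive. The only nitpick is your reference numbering for Taubes' result, but the mathematical content matches the paper's proof.
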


This is an example of a step towards finding topological obstructions to the existence of compact gradient shrinking Ricci solitons in more general settings where additional structures exist.

\end{document}